\newtheorem{thm}{Theorem}[section]
\newtheorem{prop}[thm]{Proposition}
\newtheorem{lem}[thm]{Lemma}
\newtheorem{prop-def}{thm}[section]
\theoremstyle{definition}
\newtheorem{defn}[thm]{Definition}
\newtheorem{exam}[thm]{Example}
\newcommand{\nc}{\newcommand}
 \nc{\mbibitem}[1]{\bibitem{#1}} 
 \nc{\mrm}[1]{{\rm #1}}
\nc{\pl}{\cdot}
 \nc{\la}{\longrightarrow}
\nc{\ot}{\otimes}
 \nc{\rar}{\rightarrow}
\nc{\PLA}{{\mathrm{Leib}}}
\nc{\bfk}{{\bf k}}
\nc{\C}{{\mathrm{C}}}
\nc{\RBA}{\mathsf{MLeib}}
\nc{\RBO}{\mathsf{MO}}
\nc{\End}{\mrm{End}}
\nc{\Ext}{\mrm{Ext}}
\nc{\Fil}{\mrm{Fil}}
\nc{\Fr}{\mrm{Fr}}
\nc{\Frob}{\mrm{Frob}}
\nc{\Gal}{\mrm{Gal}}
\nc{\GL}{\mrm{GL}}
\nc{\Hom}{\mrm{Hom}}
\nc{\Hoch}{\mrm{Hoch}}
\nc{\hsr}{\mrm{H}}
\nc{\hpol}{\mrm{HP}}
\nc{\im}{\mrm{im}}
\nc{\Id}{\mrm{Id}}
\nc{\Irr}{\mrm{Irr}}
\nc{\incl}{\mrm{incl}}
\nc{\length}{\mrm{length}}
\nc{\NLSW}{\mrm{NLSW}}
\nc{\Lie}{\mrm{Lie}}
\nc{\mchar}{\rm char}
\nc{\mpart}{\mrm{part}}
\nc{\ql}{{\QQ_\ell}}
\nc{\qp}{{\QQ_p}}
\nc{\rank}{\mrm{rank}}
\nc{\rcot}{\mrm{cot}}
\nc{\rdef}{\mrm{def}}
\nc{\rdiv}{{\rm div}}
\nc{\rmH}{ {\mathrm{H}}}
\nc{\rtf}{{\rm tf}}
\nc{\rtor}{{\rm tor}}
\nc{\res}{\mrm{res}}
\nc{\Sh}{{\mathrm{Sh}}}
\nc{\SL}{\mrm{SL}}
\nc{\Spec}{\mrm{Spec}}
\nc{\sgn}{{\mathrm{sgn}}}
\nc{\tor}{\mrm{tor}}
\nc{\Tr}{\mrm{Tr}}
\nc{\tr}{\mrm{tr}}
\nc{\wt}{\mrm{wt}}
\nc{\op}{\mrm{op}}
\nc{\BA}{{\mathbb A}}   \nc{\CC}{{\mathbb C}}
\nc{\DD}{{\mathbb D}}   \nc{\EE}{{\mathbb E}}
\nc{\FF}{{\mathbb F}}   \nc{\GG}{{\mathbb G}}
\nc{\HH}{ \mathrm{HH}}   \nc{\LL}{{\mathbb L}}
\nc{\NN}{{\mathbb N}}   \nc{\PP}{{\mathbb P}}
\nc{\QQ}{{\mathbb Q}}   \nc{\RR}{{\mathbb R}}
\nc{\TT}{{\mathbb T}}   \nc{\VV}{{\mathbb V}}
\nc{\ZZ}{{\mathbb Z}}   \nc{\TP}{\widetilde{P}}
\nc{\m}{{\mathbbm m}}
\nc{\cala}{{\mathcal A}}    \nc{\calc}{{\mathcal C}}
\nc{\cald}{\mathcal{D}}     \nc{\cale}{{\mathcal E}}
\nc{\calf}{{\mathcal F}}    \nc{\calg}{{\mathcal G}}
\nc{\calh}{{\mathcal H}}    \nc{\cali}{{\mathcal I}}
\nc{\call}{{\mathcal L}}    \nc{\calm}{{\mathcal M}}
\nc{\caln}{{\mathcal N}}    \nc{\calo}{{\mathcal O}}
\nc{\calp}{{\mathcal P}}    \nc{\calr}{{\mathcal R}}
\nc{\cals}{{\mathcal S}}    \nc{\calt}{{\Omega}}
\nc{\calv}{{\mathcal V}}    \nc{\calw}{{\mathcal W}}
\nc{\calx}{{\mathcal X}}
\nc{\fraka}{{\mathfrak a}}
\nc{\frakb}{\mathfrak{b}}
\nc{\frakg}{{\frak g}}
\nc{\frakl}{{\frak l}}
\nc{\fraks}{{\frak s}}
\nc{\frakm}{{\frak m}}
\nc{\frakM}{{\frak M}}
\nc{\frakp}{{\frak p}}
\nc{\frakW}{{\frak W}}
\nc{\frakX}{{\frak X}}
\nc{\frakS}{{\frak S}}
\nc{\frakA}{{\frak A}}
\nc{\frakC}{{\frak{C}}}
\nc{\frakx}{{\frakx}}
\nc{\lir}[1]{\textcolor{red}{\underline{Li:}#1 }}
\begin{document}

\title[ Modified Rota-Baxter Leibniz algebras]{ Cohomology and Deformation theory of Modified Rota-Baxter Leibniz algebras}

\author{Yizheng Li}
\address{
 School of Mathematical Sciences, Qufu Normal University,  Qufu 273165, P. R. of China}
\email{yzli1001@163.com}

\author{$\mbox{Dingguo\ Wang}^1$}\footnote{Corresponding author}
\address{School of Mathematical Sciences, Qufu Normal University,  Qufu 273165, P. R. of China}
\email{dgwang@qfnu.edu.cn}

\date{\today}

\begin{abstract}
 In this paper, we define the cohomology of a modified Rota-Baxter Leibniz algebra with
coefficients in a suitable representation.  As applications of our cohomology, we study formal
one-parameter deformations and abelian extensions of modified Rota-Baxter  Leibniz  algebras.
\end{abstract}

\subjclass[2010]{
16E40   
16S80   
17A30  
}

\keywords{ Modified Rota-Baxter Leibniz algebra, cohomology,  deformation, Abelian extension}

\maketitle

\tableofcontents

\allowdisplaybreaks

\section*{Introduction}

In 1960, Baxter \cite{B60} introduced the notion of Rota-Baxter operators on associative algebras in his study of fluctuation theory in probability. Rota-Baxter operators have been found many applications, including in Connes-Kreimer's algebraic approach
to the renormalization in perturbative quantum field theory \cite{CK00}. More precisely.
For a fixed constant $\lambda$, a Rota-Baxter operator of weight $\lambda$ is a linear operator $P$ on
an associative algebra $R$ that satisfies the Rota-Baxter equation:
\begin{eqnarray}
 P(x)P(y) = P(P(x)y) + P(xP(y)) + \lambda P(xy), \forall x, y \in R.
\end{eqnarray}
For more details on the Rota-Baxter operator, see \cite{G12}.

Semonov-Tian-Shansky \cite{STS} found that, under
suitable conditions, the operator form of the classical Yang-Baxter equation is precisely the
Rota-Baxter operator of weight 0 on a Lie algebra. As a modified form of the operator
form of the classical Yang-Baxter equation, he also introduced in that paper the modified
classical Yang-Baxter equation whose solutions are called modified $r$-matrices, modified $r$-matrices have been found many applications, including applications in the study of generalized Lax pairs and affine geometry on Lie
groups.  Recently,  the authors have considered formal deformations of  modified $r$-matrices motivated by the deformations of classical $r$-matrices in \cite{JS}.

A Rota-Baxter operator  gives a modified Rota-Baxter operator  through
a linear transformation \cite{E02}.  Guo et al. studied the
Rota-Baxter type algebra and constructed the free object uniformly in that category in \cite{ZGG}.  Recently, the author has obtained  the cohomology and  formal deformations of  modified Rota-Baxter algebras in \cite{Das2}.

The concept of Leibniz algebras was introduced  by Loday \cite{L93, LP93} in the study of the algebraic $K$-theory. Relative Rota-Baxter
operators on Leibniz algebras were studied in \cite{ST2}, which is the main ingredient in the study of the twisting theory and the bialgebra theory for Leibniz algebras \cite{ST1}. Recently,  the author has considered  weighted relative Rota-Baxter operators  on Leibniz algebras in \cite{Das1}.  Our aim in this paper is to consider modified Rota-Baxter Leibniz algebras, and define the cohomology of a modified Rota-Baxter Leibniz algebra with
coefficients in a suitable representation.  As applications of our cohomology, we study formal
one-parameter deformations and abelian extensions of modified Rota-Baxter  Leibniz  algebras.

The paper is organized as follows. In Section 1, we recall some basic definitions about  Leibniz algebras and their cohomology. In Section 2, we consider  modified Rota-Baxter Leibniz algebras  and introduce their representations.  In Section 3, we define the cohomology of a modified Rota-Baxter Leibniz algebra with
coefficients in a suitable representation. In Section 4, we study formal deformation theory and rigidity of modified Rota-Baxter Leibniz
algebras. Finally, abelian extensions are considered in Section 5.

 Throughout this paper, let $\bfk$ be a field of characteristic $0$.  Except specially stated,  vector spaces are  $\bfk$-vector spaces and  all    tensor products are taken over $\bfk$.

\section{Leibniz algebras, representations and their cohomology theory}\
We start with the background of Leibniz algebras and their cohomology that we refer the reader to~ \cite{cuvier,LP93} for more details.

\begin{defn}
A Leibniz algebra is  a vector space $\frakg$ together with a bilinear operation (called bracket) $[\cdot, \cdot]_\frakg : \frakg \otimes \frakg \rightarrow  \frakg$  satisfying
\begin{eqnarray*}
[x, [y, z]_\frakg]_\frakg=[[x, y]_\frakg, z]_\frakg+[y, [x, z]_\frakg]_\frakg,~ \text{ for } x, y, z \in \frakg.
\end{eqnarray*}
\end{defn}

A Leibniz algebra as above may be denoted by the pair $(\frakg, [\cdot, \cdot])$ or simply by $\frakg$ when no confusion arises. A Leibniz algebra whose bilinear bracket is skewsymmetric is nothing but a Lie algebra. Thus, Leibniz algebras are the non-skewsymmetric analogue of Lie algebras.

\begin{defn}
A representation of a Leibniz algebra $(\frakg, [\cdot, \cdot]_\frakg)$ consists of a triple $(V, \rho^L, \rho^R)$ in which $V$ is a vector space and $\rho^L, \rho^R : \frakg \rightarrow gl(V)$ are linear maps satisfying for $x, y \in \frakg$,
\begin{align*}
\begin{cases}
\rho^L ([x,y]_\frakg) = \rho^L (x)  \rho^L (y) - \rho^L (y)  \rho^L (x),\\
\rho^R ([x,y]_\frakg) = \rho^L (x) \rho^R (y) - \rho^R (y)  \rho^L (x),\\
\rho^R ([x,y]_\frakg) = \rho^L (x)  \rho^R (y) + \rho^R (y)  \rho^R (x).
\end{cases}
\end{align*}
\end{defn}

It follows that any Leibniz algebra $\frakg$ is a representation of itself with
\begin{align*}
    \rho^L (x) = L_x = [x, ~]_\frakg ~~ \text{ and } ~~ \rho^R (x) = R_x = [~, x]_\frakg, \text{ for } x \in \frakg.
\end{align*}
Here $L_x$ and $R_x$ denotes the left and right multiplications by $x$, respectively. This is called the regular representation.

\medskip

Let $(\frakg, [\cdot, \cdot]_\frakg)$ be a Leibniz algebra and  $(V, \rho^L, \rho^R)$ be a representation of it. The cohomology of the Leibniz algebra $\frakg$ with coefficients in $V$ is the
cohomology of the cochain complex $\{ C^*(\frakg, V), \delta \}$, where $C^n (\mathfrak{g}, V) = \mathrm{Hom}(\frakg^{\otimes n}, V )$ for $n \geq 0$, and the
coboundary operator $\delta:C^n(\frakg, V ) \rightarrow C^{n+1}(\frakg, V )$ given by
\begin{align*}
&(\delta^n f)(x_1,\ldots, x_{n+1})\\
&=\sum^{n}_{i=1}(-1)^{i+1}\rho^L(x_i)f(x_1,\ldots, \hat{x}_i, \ldots, x_{n+1}) + (-1)^{n+1}\rho^R(x_{n+1})f(x_1,\ldots,  x_{n})\\
&+\sum_{1\leq i< j\leq n+1} (-1)^{i}f(x_1,\ldots, \hat{x}_i, \ldots, x_{j-1}, [x_i, x_j]_\frakg, x_{j+1},\ldots, x_{n+1}),
\end{align*}
for $x_1, \ldots, x_{n+1}\in \frakg$. The corresponding cohomology groups are denoted by $H^*_{\mathrm{Leib}}(\frakg, V).$ This cohomology has been first appeared in \cite{cuvier} and rediscover by Loday and Pirashvili \cite{LP93}. This cohomology is also the Loday-Pirashvili cohomology.


\section{Representations of  modified Rota-Baxter Leibniz algebras}

In  this section, we consider  modified Rota-Baxter Leibniz algebras  and introduce their representations.
We also provide various examples and new constructions.

\begin{defn}\label{Def: Rota-Baxter 3-Lie algebra}
Let $\lambda\in \bfk$. Given  a Leibniz algebra  $(\frakg,  \mu=[\cdot, \cdot]_\frakg)$, it is called  a  modified Rota-Baxter Leibniz algebra of weight $\lambda$,  if there is a modified Rota-Baxter
operator $K: \frakg \rightarrow \frakg $ of weight $\lambda$ such that
\begin{eqnarray}\label{Eq: Rota-Baxter relation}
[K(x),  K(y)]_\frakg&=&K\big([K(x), y]_\frakg+[x, K(y)]_\frakg)+\lambda[x, y]_\frakg\end{eqnarray}	
for any $x, y \in \frakg $.

 A modified Rota-Baxter Leibniz algebra of weight $\lambda$ is a triple $(\frakg, [\cdot, \cdot]_\frakg,  K)$ consisting of a Leibniz algebra $(\frakg,  [\cdot, \cdot]_\frakg)$ together with a
modified Rota-Baxter operator $K$ of weight $\lambda$ on it.
\end{defn}
\begin{exam}
Consider the three-dimensional Leibniz algebra $(\mathfrak{g}, [\cdot, \cdot]_\mathfrak{g})$ given with
respect to a basis $\{e_1, e_2, e_3\}$ whose nonzero products are given as follows:
\begin{eqnarray*}
[e_1, e_1]=e_3.
\end{eqnarray*}
Then $K=\left(
                                            \begin{array}{ccc}
                                              a_{11} & a_{12} & a_{13} \\
                                              a_{21} & a_{22} & a_{23} \\
                                              a_{31} & a_{32} & a_{33} \\
                                            \end{array}
                                          \right)$        is a  modified Rota-Baxter
operator of weight $\lambda$ on $(\mathfrak{g}, [\cdot, \cdot]_\mathfrak{g})$  if and only if
\begin{eqnarray*}
&&[Ke_i,Ke_j]_\mathfrak{g}= K([K e_i, e_j ]_\mathfrak{g}+[e_i, Ke_j]_\mathfrak{g})+\lambda[e_i,e_j]_\mathfrak{g},    ~~~~~\forall i, j=1, 2, 3.
\end{eqnarray*}
We have $[K e_1,Ke_1]_\mathfrak{g}=[a_{11}e_1 + a_{21}e_2 + a_{31}e_3, a_{11}e_1 + a_{21}e_2 + a_{31}e_3]_\mathfrak{g}= a^2_{11}e_3$ and
\begin{eqnarray*}
&&K ([K e_1, e_1]_\mathfrak{g}+[e_1, Ke_1]_g)+\lambda[e_1,e_1]_\mathfrak{g}\\
&=&K ([a_{11}e_1 + a_{21}e_2 + a_{31}e_3, e_1]_\mathfrak{g}+[e_1, a_{11}e_1 + a_{21}e_2 + a_{31}e_3]_\mathfrak{g})+\lambda e_3\\
&=& 2a_{11}Ke_3+\lambda e_3\\
&=& 2a_{11}a_{13}e_1+2a_{11}a_{23}e_2+2a_{11}a_{33}e_3+\lambda e_3.
\end{eqnarray*}
Thus, by $[K e_1,Ke_1]_\mathfrak{g}=K ([K e_1, e_1]_\mathfrak{g}+[e_1, Ke_1]_\mathfrak{g})+\lambda[e_1,e_1]_\mathfrak{g}$, we have
\begin{eqnarray*}
 a_{11}a_{13}=0, a_{11}a_{23}=0, a^2_{11}=2a_{11}a_{33}+\lambda.
\end{eqnarray*}
Similarly, we obtain
\begin{eqnarray*}
&& a_{11}a_{12}=a_{12}a_{33},~~~~~~~~~~~ a_{12}a_{13}=0,~~~~~~~~~~ a_{12}a_{23}=0;\\
&& a_{11}a_{13} = a_{13}a_{33},~~~~~~~~~~~ a_{13}a_{13} = 0,~~~~~~~~~~ a_{13}a_{23} = 0;\\
&& a_{12}a_{11} =  a_{12}a_{33}, ~~~~~~~~~~~a_{12}a_{13} = 0, ~~~~~~~~~~a_{12}a_{23} = 0;\\
&& a_{13}a_{11} =  a_{13}a_{33},~~~~~~~~~~~ a_{13}a_{13} = 0,~~~~~~~~ ~~ a_{13}a_{23} = 0,\\
&& a^2_{12} = 0, ~~~~~~~~~~~~~~~~~~~~~~~ a^2_{13} = 0,~~~~~~~~~~~~~~~~~~~~~~~~~  a_{12}a_{13} = 0.
\end{eqnarray*}
Summarize the above discussion, we have the following two cases:

(1)~If $a_{11}=a_{12}=a_{13}=0$,  then any $K=\left(
                                            \begin{array}{ccc}
                                              0 & 0 & 0 \\
                                              a_{21} & a_{22} & a_{23} \\
                                              a_{31} & a_{32} & a_{33} \\
                                            \end{array}
                                          \right)$            is a modified Rota-Baxter
operator of weight 0  on $(\mathfrak{g}, [\cdot, \cdot]_\mathfrak{g})$.

(2)~ If $a_{12}=a_{13}=a_{23}=0$ and $a_{11} \neq 0$,  then any  $K=\left(
                                            \begin{array}{ccc}
                                              a_{11} & 0 & 0 \\
                                              a_{21} & a_{22} & 0 \\
                                              a_{31} & a_{32} & a_{33} \\
                                            \end{array}
                                          \right)$      is a  modified Rota-Baxter
operator of weight $(a^2_{11}-2a_{11}a_{33})$ on $(\mathfrak{g}, [\cdot, \cdot]_\mathfrak{g})$.
\end{exam}
\begin{defn}
Let $(\frakg, [\cdot, \cdot]_\frakg,  K)$ and $(\frakg', [\cdot, \cdot]_{\frakg'},  K')$ be two modified Rota-Baxter Leibniz algebras of weight $\lambda$. A morphism from
$(\frakg, [\cdot, \cdot]_\frakg,  K)$ to $(\frakg', [\cdot, \cdot]_{\frakg'},  K')$ is a Leibniz algebra homomorphism $\phi: \frakg\rightarrow \frakg'$  satisfying  $\phi \circ K=K'\circ \phi$.

\end{defn}

Denote by $\RBA$ the category of modified Rota-Baxter Leibniz algebras of weight $\lambda$ with obvious morphisms.

Recall  that  a linear map $T: \frakg\rightarrow \frakg$ is said to be  a Rota-Baxter operator of  weight $\lambda$ on the  Leibniz algebra $(\frakg, [\cdot, \cdot]_\frakg)$ if $T$ satisfies
\begin{eqnarray*}
[T(x), T(y)]_\frakg=T([T(x), y]_\frakg+[x, T(y)]_\frakg+\lambda[x, y]_\frakg), \text{~for~} x, y\in \frakg.
\end{eqnarray*}
A Rota-Baxter Leibniz algebra of weight $\lambda$ is a triple $(\frakg, [\cdot, \cdot]_\frakg,  T)$ consisting of a Leibniz algebra $(\frakg,  [\cdot, \cdot]_\frakg)$ together with a
Rota-Baxter operator $T$ of weight $\lambda$ on it.

Furthermore, we have
\begin{eqnarray*}
&&[(2T+\lambda \mathrm{id_\frakg})(x),  (2T+\lambda \mathrm{id_\frakg})(y)]_\frakg\\
&=& [2T(x)+\lambda x,  2T(y)+\lambda y]_\frakg\\
&=& 4[T(x), T(y)]_\frakg+2\lambda[T(x), y]_\frakg+2\lambda[x, T(y)]_\frakg+\lambda^2[x, y]_\frakg\\
&=&  4T([T(x), y]_\frakg+[x, T(y)]_\frakg+\lambda[x, y]_\frakg)\\
&& +2\lambda[T(x), y]_\frakg+2\lambda[x, T(y)]_\frakg+\lambda^2[x, y]_\frakg\\
&=&(2T+\lambda \mathrm{id_\frakg})\big([(2T+\lambda \mathrm{id_\frakg})(x), y]_\frakg+[x, (2T+\lambda \mathrm{id_\frakg})(y)]_\frakg)-\lambda^2[x, y]_\frakg.
\end{eqnarray*}
Thus, $(\frakg, [\cdot, \cdot]_\frakg,  2T+\lambda \mathrm{id_\frakg})$ is a modified Rota-Baxter Leibniz algebra of $-\lambda^2$.

According to the above discussion, we have
\begin{lem}
Let $(\frakg, [\cdot, \cdot]_\frakg)$ be a Leibniz algebra. Then $(\frakg, [\cdot, \cdot]_\frakg, T)$ is a Rota-Baxter Leibniz algebra of  weight $\lambda$ if and only if $(\frakg, [\cdot, \cdot]_\frakg, 2T+\lambda \mathrm{id_\frakg})$ is a modified Rota-Baxter Leibniz algebra of weight $-\lambda^2$.
\end{lem}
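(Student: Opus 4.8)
The plan is to observe that the displayed computation immediately preceding the statement is in fact a chain of \emph{identities}, so reading it in both directions gives the equivalence. First I would record the forward implication: it is precisely that computation, which expands $[(2T+\lambda\,\mathrm{id}_{\frakg})(x),(2T+\lambda\,\mathrm{id}_{\frakg})(y)]_\frakg$ by bilinearity of the bracket, substitutes the Rota-Baxter identity for $T$, and regroups the terms to obtain Equation~\eqref{Eq: Rota-Baxter relation} for the operator $2T+\lambda\,\mathrm{id}_{\frakg}$ with weight $-\lambda^2$. So the ``only if'' part requires nothing new.

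For the converse, I would set $K := 2T+\lambda\,\mathrm{id}_{\frakg}$ and assume $K$ satisfies the modified Rota-Baxter identity of weight $-\lambda^2$, namely $[K(x),K(y)]_\frakg = K\big([K(x),y]_\frakg+[x,K(y)]_\frakg\big) - \lambda^2[x,y]_\frakg$ for all $x,y\in\frakg$. Substituting $K=2T+\lambda\,\mathrm{id}_{\frakg}$ and expanding by bilinearity, the left-hand side becomes $4[T(x),T(y)]_\frakg + 2\lambda[T(x),y]_\frakg + 2\lambda[x,T(y)]_\frakg + \lambda^2[x,y]_\frakg$, while the right-hand side becomes $4T\big([T(x),y]_\frakg+[x,T(y)]_\frakg+\lambda[x,y]_\frakg\big) + 2\lambda[T(x),y]_\frakg + 2\lambda[x,T(y)]_\frakg + \lambda^2[x,y]_\frakg$. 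Cancelling the common summands $2\lambda[T(x),y]_\frakg + 2\lambda[x,T(y)]_\frakg + \lambda^2[x,y]_\frakg$ and dividing by $4$ — which is where I use that $\bfk$ has characteristic $0$ — leaves exactly $[T(x),T(y)]_\frakg = T\big([T(x),y]_\frakg+[x,T(y)]_\frakg+\lambda[x,y]_\frakg\big)$, i.e.\ $T$ is a Rota-Baxter operator of weight $\lambda$ on $(\frakg,[\cdot,\cdot]_\frakg)$.

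There is no real obstacle here: the argument is a purely formal bilinear manipulation, and the only hypothesis that genuinely intervenes is the invertibility of $2$ in $\bfk$, so that $K=2T+\lambda\,\mathrm{id}_{\frakg}\mapsto T=\tfrac12(K-\lambda\,\mathrm{id}_{\frakg})$ is an honest change of variable and the overall factor $4$ can be removed at the end. The point worth stating explicitly is simply that every step in the original display is an equality rather than a one-way implication, so the ``if and only if'' is automatic once the forward direction has been written down; I would remark this and, for completeness, include the short reversed computation above.
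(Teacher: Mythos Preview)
Your proposal is correct and follows the same approach as the paper: the forward direction is exactly the displayed computation preceding the lemma, and your converse runs that chain backwards, using that all steps except the one invoking the Rota-Baxter identity are unconditional bilinear identities. The paper in fact only writes out the forward direction and then states the lemma as following ``according to the above discussion''; your explicit treatment of the converse, including the observation that dividing by $4$ requires $\mathrm{char}\,\bfk\neq 2$, is more complete than what the paper records.
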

\begin{defn}\label{Def: Rota-Baxter representations} Let $(\frakg, [\cdot, \cdot]_\frakg, K)$ be a modified Rota-Baxter Leibniz algebra of weight $\lambda$ and $(V, \rho^L, \rho^R)$ be a representation over Leibniz
algebra $(\frakg, [\cdot, \cdot]_\frakg)$. We say that $(V, \rho^L, \rho^R,  K_V)$ is a representation over modified Rota-Baxter Leibniz algebra $(\frakg, [\cdot, \cdot]_\frakg, K)$ of weight $\lambda$   if $(V, \rho^L, \rho^R)$ is endowed with a linear operator $K_V : V \rightarrow V$ such that the following equations
\begin{eqnarray}\label{Def: Rota-Baxter representations}
&&\rho^L(K(x))K_V(v) =K_V\big(\rho^L(K(x)) v +\rho^L(x)K_V(v)\big)+\lambda\rho^L(x)v\nonumber\\
&&\rho^R(K(x))K_V(v) =K_V\big(\rho^R(K(x)) v +\rho^R(x)K_V(v)\big)+\lambda\rho^R(x)v
\end{eqnarray}
hold for any  $x \in \frakg$ and $v \in V$.

\end{defn}
\begin{exam}
It is ready to see that  $(\frakg ,[\cdot, \cdot]_\frakg, K)$ itself is a representation over the modified Rota-Baxter Leibniz algebra $(\frakg ,[\cdot, \cdot]_\frakg, K)$ of weight $\lambda$.
\end{exam}
\begin{defn}\cite{Das1}
 Let $(\frakg, [\cdot, \cdot]_\frakg, T)$ be a  Rota-Baxter Leibniz algebra of weight $\lambda$ and $(V, \rho^L, \rho^R)$ be a representation over Leibniz
algebra $(\frakg, [\cdot, \cdot]_\frakg)$. We say that $(V, \rho^L, \rho^R,  T_V)$ is a representation over  Rota-Baxter Leibniz algebra $(\frakg, [\cdot, \cdot]_\frakg, T)$ of weight $\lambda$   if $(V, \rho^L, \rho^R)$ is endowed with a linear operator $T_V : V \rightarrow V$ such that the following equations
\begin{eqnarray}\label{Def: Rota-Baxter representations}
&&\rho^L(T(x))T_V(v) =T_V\big(\rho^L(T(x)) v +\rho^L(x)T_V(v)+\lambda\rho^L(x)v\big)\nonumber\\
&&\rho^R(T(x))T_V(v) =T_V\big(\rho^R(T(x)) v +\rho^R(x)T_V(v)+\lambda\rho^R(x)v\big)
\end{eqnarray}
hold for any  $x \in \frakg$ and $v \in V$.

\end{defn}
The following result finds the relation between representation over Rota-Baxter Leibniz algebras and over modified
Rota-Baxter  Leibniz algebras.
\begin{prop}
Let $(\frakg, [\cdot, \cdot]_\frakg, T)$ be a Rota-Baxter Leibniz algebra of weight $\lambda$ and $(V, \rho^L, \rho^R, T_V)$ be a representation over Leibniz
algebra $(\frakg, [\cdot, \cdot]_\frakg)$. Then $(V, \rho^L,  \rho^R,  2T_V+\lambda \mathrm{id}_M)$ is   a representation  over the modified
Rota-Baxter  Leibniz algebras $(\frakg, [\cdot, \cdot]_\frakg, 2T+\lambda \mathrm{id_\frakg})$ of weight $-\lambda^2$.
\end{prop}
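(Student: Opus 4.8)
The plan is to verify directly that the operator $K_V := 2T_V + \lambda\,\mathrm{id}_V$ satisfies the two defining identities of a representation over a modified Rota-Baxter Leibniz algebra from Definition~\ref{Def: Rota-Baxter representations}, with $K = 2T + \lambda\,\mathrm{id}_\frakg$ in the role of the modified Rota-Baxter operator and weight $-\lambda^2$. Since the two equations (the $\rho^L$ one and the $\rho^R$ one) are formally identical in shape, I would carry out the computation for $\rho^L$ in full and then remark that the $\rho^R$ case is entirely analogous.

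The key computation expands $\rho^L(K(x))K_V(v) = \rho^L(2T(x)+\lambda x)\bigl(2T_V(v)+\lambda v\bigr)$ by bilinearity into four terms: $4\,\rho^L(T(x))T_V(v)$, $2\lambda\,\rho^L(T(x))v$, $2\lambda\,\rho^L(x)T_V(v)$, and $\lambda^2\,\rho^L(x)v$. On the first of these I substitute the Rota-Baxter representation identity from the hypothesis, namely $\rho^L(T(x))T_V(v) = T_V\bigl(\rho^L(T(x))v + \rho^L(x)T_V(v) + \lambda\rho^L(x)v\bigr)$, which turns the $4\,\rho^L(T(x))T_V(v)$ term into $4T_V\bigl(\rho^L(T(x))v\bigr) + 4T_V\bigl(\rho^L(x)T_V(v)\bigr) + 4\lambda T_V\bigl(\rho^L(x)v\bigr)$. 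Then I collect all the $T_V(\cdots)$ terms and all the remaining $\lambda$-scalar terms and check they assemble into $K_V\bigl(\rho^L(K(x))v + \rho^L(x)K_V(v)\bigr) + (-\lambda^2)\rho^L(x)v$; expanding the right-hand side of the target identity the same way (using $K = 2T+\lambda\,\mathrm{id}$ and $K_V = 2T_V+\lambda\,\mathrm{id}$) and matching term by term finishes it. This is exactly the Leibniz-algebra analogue of the computation already displayed in the excerpt just before the cited lemma relating Rota-Baxter and modified Rota-Baxter Leibniz algebras, so the bookkeeping is guided by that template.

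I expect no genuine obstacle here — the statement is a purely formal consequence of linearity plus one substitution, and the underlying algebraic identity is the same one used to prove that $2T+\lambda\,\mathrm{id}_\frakg$ is a modified Rota-Baxter operator of weight $-\lambda^2$. The only point requiring mild care is sign-tracking: the Rota-Baxter representation identity carries the $\lambda\rho^L(x)v$ term \emph{inside} $T_V$, whereas the modified version carries the weight term \emph{outside}, and the weight flips sign from $\lambda$ to $-\lambda^2$, so one must be careful that the leftover scalar terms really do collapse to $-\lambda^2\rho^L(x)v$ rather than $+\lambda^2\rho^L(x)v$. I would also note in passing that $(V,\rho^L,\rho^R)$ remains a representation of the underlying Leibniz algebra $(\frakg,[\cdot,\cdot]_\frakg)$ unchanged, so nothing needs to be checked there; only the compatibility of the new operator $2T_V+\lambda\,\mathrm{id}_V$ with $2T+\lambda\,\mathrm{id}_\frakg$ is at issue. (A minor typo in the statement, $\mathrm{id}_M$ for $\mathrm{id}_V$, should be corrected.)
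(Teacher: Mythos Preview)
Your proposal is correct and follows essentially the same approach as the paper: expand $\rho^L(2T(x)+\lambda x)(2T_V(v)+\lambda v)$ into the four bilinear terms, apply the Rota-Baxter representation identity to $4\rho^L(T(x))T_V(v)$, and then regroup to recognize the modified Rota-Baxter representation identity with weight $-\lambda^2$, treating the $\rho^R$ case analogously. Your observation about the $\mathrm{id}_M$ typo is also apt.
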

\begin{proof}
For any $x\in \frakg$ and $v\in V$, we have
\begin{eqnarray*}
&& \rho^L(2T(x)+\lambda x)(2T_V (v)+\lambda v)\\
&=& 4\rho^L(T(x))T_V (v)+2\lambda(\rho^L(T(x))v+\rho^L(x)T_V(v))+\lambda^2\rho^L(x)v\\
&=& 4T_V\big(\rho^L(T(x)) v +\rho^L(x)T_V(v)+\lambda\rho^L(x)v\big)\\
&&+2\lambda(\rho^L(T(x))v+\rho^L(x)T_V(v))+\lambda^2\rho^L(x)v\\
&=&(2T_V+\lambda \mathrm{id}_M)\big(\rho^L((2T+\lambda \mathrm{id_\frakg})(x)) v +\rho^L(x)(2T_V+\lambda \mathrm{id}_M)(v)\big)-\lambda^2\rho^L(x)v.
\end{eqnarray*}
Similarly, we can show that
\begin{eqnarray*}
&& \rho^R(2T(x)+\lambda x)(2T_V (v)+\lambda v)\\
&=&(2T_V+\lambda \mathrm{id}_M)\big(\rho^R((2T+\lambda \mathrm{id_\frakg})(x)) v +\rho^R(x)(2T_V+\lambda \mathrm{id}_M)(v)\big)-\lambda^2\rho^R(x)v.
\end{eqnarray*}
And completed the proof.
\end{proof}

In the following, we construct the semidirect product in the context of  modified Rota-Baxter Leibniz
algebras of any weight.

\begin{prop}
Let $(\frakg ,[\cdot, \cdot]_\frakg, K)$ be a   modified Rota-Baxter Leibniz algebra of weight $\lambda$  and $(V, \rho^L, \rho^R,  K_V)$  be a representation of it. Then $(\frakg\oplus V, K\oplus K_V )$ is a  modified Rota-Baxter Leibniz algebra, where the Leibniz bracket on $\frakg\oplus V$ is given
by the semidirect product
\begin{eqnarray}
 [x+u, y+v]_{\frakg\oplus V}&=&[x, y]_\frakg +\rho^L(x)v+\rho^R(y)u,
\end{eqnarray}
for any  $x, y\in \frakg$ and $u, v\in V$.
\end{prop}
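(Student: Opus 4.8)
The plan is to verify the two defining conditions of a modified Rota-Baxter Leibniz algebra for the pair $(\frakg \oplus V, K \oplus K_V)$: first that $(\frakg \oplus V, [\cdot,\cdot]_{\frakg\oplus V})$ is a Leibniz algebra (the semidirect product), and second that $K \oplus K_V$ satisfies the modified Rota-Baxter identity \eqref{Eq: Rota-Baxter relation} with respect to this bracket and weight $\lambda$. The first part is the classical fact that a representation $(V, \rho^L, \rho^R)$ of a Leibniz algebra yields a semidirect product Leibniz algebra structure on $\frakg \oplus V$; I would either cite this or check the Leibniz identity directly by expanding $[x+u,[y+v,z+w]_{\frakg\oplus V}]_{\frakg\oplus V}$ and comparing with $[[x+u,y+v]_{\frakg\oplus V},z+w]_{\frakg\oplus V} + [y+v,[x+u,z+w]_{\frakg\oplus V}]_{\frakg\oplus V}$, where the $\frakg$-components match by the Leibniz identity in $\frakg$ and the $V$-components match precisely by the three axioms in the definition of a representation of a Leibniz algebra.

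**Next I would** turn to the main computation. Write $\widetilde{K} = K \oplus K_V$, so $\widetilde{K}(x+u) = K(x) + K_V(u)$. The goal is to show
\[
[\widetilde K(x+u), \widetilde K(y+v)]_{\frakg\oplus V} = \widetilde K\big([\widetilde K(x+u), y+v]_{\frakg\oplus V} + [x+u, \widetilde K(y+v)]_{\frakg\oplus V}\big) + \lambda [x+u, y+v]_{\frakg\oplus V}.
\]
I would expand both sides using the semidirect bracket formula. The $\frakg$-component of each side reduces exactly to the modified Rota-Baxter identity \eqref{Eq: Rota-Baxter relation} for $K$ on $\frakg$, so that part is immediate. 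The $V$-component of the left side is $\rho^L(K(x))K_V(v) + \rho^R(K(y))K_V(u)$; the $V$-component of the right side is
\[
K_V\big(\rho^L(K(x))v + \rho^L(x)K_V(v) + \rho^R(K(y))u + \rho^R(y)K_V(u)\big) + \lambda\big(\rho^L(x)v + \rho^R(y)u\big).
\]
Matching these two is precisely the sum of the two displayed equations in Definition~\ref{Def: Rota-Baxter representations} (the first applied to $(x,v)$, the second applied to $(y,u)$). So the $V$-component identity holds as well.

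**I do not expect a genuine obstacle here;** the proof is a bookkeeping exercise in which each structural condition on the semidirect product corresponds term-by-term to one of the hypotheses. The only mildly delicate point is keeping the left/right multiplication bookkeeping straight — making sure that when $\widetilde K$ is applied to the bracket $[x+u, y+v]_{\frakg\oplus V}$ the $\rho^L$ terms stay attached to the correct argument and the $\rho^R$ terms to the other — but once the two halves of \eqref{Def: Rota-Baxter representations} are lined up against the $V$-component, everything cancels. I would present the $\frakg$-part as "immediate from \eqref{Eq: Rota-Baxter relation}" and write out only the $V$-component computation in full, invoking Definition~\ref{Def: Rota-Baxter representations} at the final step.
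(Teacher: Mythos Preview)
Your proposal is correct and follows essentially the same approach as the paper: expand $[\widetilde K(x+u),\widetilde K(y+v)]_{\frakg\oplus V}$, use the modified Rota-Baxter identity for $K$ on the $\frakg$-component and the two identities of Definition~\ref{Def: Rota-Baxter representations} on the $V$-component, and regroup. The paper presents this as one continuous chain of equalities rather than explicitly splitting into components, and it takes the semidirect product Leibniz structure as known, but the substance is identical.
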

\begin{proof}
For any  $x, y \in \frakg$ and $u, v\in V$, we have
\begin{eqnarray*}
&& [(K\oplus K_V)(x+u), (K\oplus K_V)(y+v)]_{\frakg\oplus V}\\
&=& [K(x), K(y)]_\frakg +\rho^L(K(x))K_V(v)+\rho^R(K(y))K_V(u)\\
&=&K\big([K(x), y]_\frakg+[x, K(y)]_\frakg\big)+\lambda[x, y]_\frakg +K_V\big(\rho^L(K(x)) v +\rho^L(x)K_V(v)\big)\\
&&+\lambda \rho^L(x)v+K_V\big(\rho^R(K(y)) u +\rho^R(y)K_V(u)\big) +\lambda\rho^R(y)u\\
&=& (K\oplus K_V)\big([(K\oplus K_V)(x+u), y+v]_{\frakg\oplus V}+[x+u,  (K\oplus K_V)(y+v)]_{\frakg\oplus V}\big)\\
&&+\lambda [x+u, y+v]_{\frakg\oplus V}
\end{eqnarray*}
This shows that  $K\oplus K_V$ is a  modified Rota-Baxter operator of  weight $\lambda$ on the semidirect product Leibniz algebra. Hence
the result follows.
\end{proof}

\begin{prop}\label{Prop: new RB 3-Lie algebra}
	Let $(\frakg, [\cdot, \cdot]_\frakg, K)$ be a modified Rota-Baxter Leibniz algebra of weight $\lambda$. Define a new binary operation as:
\begin{eqnarray}
[x, y]_K:&=&[K(x), y]_\frakg+[x, K(y)]_\frakg,
\end{eqnarray}
for any $x, y\in \frakg $. Then
\begin{itemize}

\item[(i)]      $(\frakg ,[\cdot, \cdot]_K )$ is a new  Leibniz algebra, we denote this Leibniz algebra by $\frakg_K$;

    \item[(ii)] the triple  $(\frakg_K ,[\cdot, \cdot]_K, K)$ is a modified Rota-Baxter Leibniz algebra of weight $\lambda$.
    \end{itemize}
	\end{prop}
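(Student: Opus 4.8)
The plan is to verify the two assertions by direct computation, using the defining identity \eqref{Eq: Rota-Baxter relation} for $K$ repeatedly. For part (i), I would check that $[\cdot,\cdot]_K$ satisfies the (left) Leibniz identity. Expanding $[x,[y,z]_K]_K$ by the definition produces four terms of the form $[K(x),[K(y),z]_\frakg]_\frakg$, $[K(x),[y,K(z)]_\frakg]_\frakg$, $[x,K[K(y),z]_\frakg]_\frakg$ and $[x,K[y,K(z)]_\frakg]_\frakg$. In the last two terms I would substitute \eqref{Eq: Rota-Baxter relation} in the form $K([K(y),z]_\frakg+[y,K(z)]_\frakg)=[K(y),K(z)]_\frakg-\lambda[y,z]_\frakg$, so that, after adding the two, they combine into $[x,[K(y),K(z)]_\frakg]_\frakg-\lambda[x,[y,z]_\frakg]_\frakg$ plus a leftover in which $K$ is applied; organizing carefully one sees that \emph{all} resulting brackets are ordinary $[\cdot,\cdot]_\frakg$-brackets with arguments among $x,K(y),K(z)$ (and the $\lambda$-correction terms among $x,y,z$). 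One then applies the Leibniz identity of $(\frakg,[\cdot,\cdot]_\frakg)$ termwise to each of these "homogeneous" pieces and recollects. The identity for $[\cdot,\cdot]_K$ follows because each homogeneous piece already closes under the original Leibniz identity. This is bookkeeping-heavy but routine; the only subtlety is making sure the $\lambda$-terms match, which they do since the map $x\mapsto\lambda x$ is compatible.

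For part (ii), I must show that the \emph{same} operator $K$ is a modified Rota-Baxter operator of weight $\lambda$ with respect to the new bracket $[\cdot,\cdot]_K$, i.e.
\[
[K(x),K(y)]_K = K\big([K(x),y]_K+[x,K(y)]_K\big)+\lambda[x,y]_K.
\]
I would expand the left-hand side: $[K(x),K(y)]_K=[K^2(x),K(y)]_\frakg+[K(x),K^2(y)]_\frakg$. For the right-hand side, $[K(x),y]_K+[x,K(y)]_K = [K^2(x),y]_\frakg+[K(x),K(y)]_\frakg+[K(x),K(y)]_\frakg+[x,K^2(y)]_\frakg$, so applying $K$ gives $K[K^2(x),y]_\frakg+2K[K(x),K(y)]_\frakg+K[x,K^2(y)]_\frakg$, and $\lambda[x,y]_K=\lambda[K(x),y]_\frakg+\lambda[x,K(y)]_\frakg$. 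Now use \eqref{Eq: Rota-Baxter relation} twice on the middle term $2K[K(x),K(y)]_\frakg$ — or rather use it with the pair $(K(x),y)$ and with $(x,K(y))$ — to rewrite $K[K^2(x),y]_\frakg+K[K(x),K(y)]_\frakg=[K^2(x),K(y)]_\frakg-\lambda[K(x),y]_\frakg$ and similarly $K[K(x),K(y)]_\frakg+K[x,K^2(y)]_\frakg=[K(x),K^2(y)]_\frakg-\lambda[x,K(y)]_\frakg$. Adding these two equalities shows the right-hand side equals $[K^2(x),K(y)]_\frakg+[K(x),K^2(y)]_\frakg-\lambda[K(x),y]_\frakg-\lambda[x,K(y)]_\frakg+\lambda[K(x),y]_\frakg+\lambda[x,K(y)]_\frakg = [K^2(x),K(y)]_\frakg+[K(x),K^2(y)]_\frakg$, which is exactly the left-hand side.

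The main obstacle is purely organizational: part (i) requires expanding a threefold nested bracket into roughly a dozen terms and then grouping them into "$K$-homogeneous" packets on which the original Leibniz identity can be applied without re-introducing $K$; one must be attentive to signs and to the placement of the $\lambda[x,y]_\frakg$ corrections so that they cancel in pairs. Part (ii), by contrast, is a short two-line application of \eqref{Eq: Rota-Baxter relation} once the left- and right-hand sides are expanded in terms of $[\cdot,\cdot]_\frakg$ and powers of $K$, as sketched above.
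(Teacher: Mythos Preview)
Your approach is correct and essentially the same as the paper's: for (i) you expand $[x,[y,z]_K]_K$, use \eqref{Eq: Rota-Baxter relation} to eliminate the inner $K$'s, apply the original Leibniz identity termwise, and regroup; for (ii) you expand both sides in terms of $[\cdot,\cdot]_\frakg$ and apply \eqref{Eq: Rota-Baxter relation} with the pairs $(K(x),y)$ and $(x,K(y))$, exactly as the paper does. One small wording slip: after substituting $K([K(y),z]_\frakg+[y,K(z)]_\frakg)=[K(y),K(z)]_\frakg-\lambda[y,z]_\frakg$ there is no ``leftover in which $K$ is applied''---the replacement is exact, and the resulting four ordinary brackets carry arguments from $\{K(x),K(y),z\}$, $\{K(x),y,K(z)\}$, $\{x,K(y),K(z)\}$, and (with coefficient $-\lambda$) $\{x,y,z\}$, not just the single pattern you list.
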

\begin{proof}
(i) For any $x, y, z\in \frakg$, we have
\begin{align*}
[x, [y, z]_K]_K &= [x, [K(y), z]_\frakg+[y, K(z)]_\frakg]_K\\
&= [K(x), [K(y), z]_\frakg]_\frakg+ [x, K[K(y), z]_\frakg]_\frakg+ [K(x), [y, K(z)]_\frakg]_\frakg+[x, K[y, K(z)]_\frakg]_\frakg\\
&=[K(x), [K(y), z]_\frakg]_\frakg+ [K(x), [y, K(z)]_\frakg]_\frakg +[x, [K(y), K(z)]_\frakg]_\frakg-\lambda [x, [y,z]_\frakg]_\frakg\\
&= [[K(x), K(y)]_\frakg, z]_\frakg+[K(y), [K(x), z]_\frakg ]_\frakg +[[K(x), y]_\frakg, K(z)]_\frakg+[y, [K(x), K(z)]_\frakg ]_\frakg\\
&+ [[x, K(y)]_\frakg, K(z)]_\frakg+[K(y), [x, K(z)]_\frakg ]_\frakg -\lambda [[x, y]_\frakg z]_\frakg-\lambda [y, [x, z]_\frakg]_\frakg\\
&= [[K(x), y]_\frakg+[x, K(y)]_\frakg, z]_\frakg + [K(y), [K(x), z]_\frakg ]_\frakg +[[K(x), y]_\frakg, K(z)]_\frakg\\
&+[y, [K(x), z]_\frakg+[x, K(z)]_\frakg ]_\frakg + [[x, K(y)]_\frakg, K(z)]_\frakg+[K(y), [x, K(z)]_\frakg ]_\frakg\\
&= [[x, y]_K, z]_K+[y, [x, z]_K]_K.
\end{align*}
This shows that $(\frakg, [\cdot, \cdot]_K )$ is a  Leibniz algebra.

(ii)  For any $x, y \in \frakg $, we have
\begin{align*}
[K(x),  K(y)]_K&= [K^2(x), K(y)]_\frakg+[K(x), K^2(y)]_\frakg\\
&= K\big([K^2(x), y]_\frakg+[K(x), K(y)]_\frakg)+\lambda[K(x), y]_\frakg \\
&+ K\big([K(x), K(y)]_\frakg+[x, K^2(y)]_\frakg)+\lambda[x, K(y)]_\frakg \\
&=K([K(x), y]_K+[x, K(y)]_K)+\lambda [x, y]_K.
\end{align*}
This shows that the triple  $(\frakg_K ,[\cdot, \cdot]_K, K)$ is a modified Rota-Baxter Leibniz algebra of weight $\lambda$.
\end{proof}

The following result will be useful in the next section to construct
the cohomology of modified Rota-Baxter Leibniz algebras of weight $\lambda$.

\begin{thm}\label{Prop: new-representation}
Let $(\frakg ,[\cdot, \cdot]_\frakg, K)$ be a  modified Rota-Baxter Leibniz algebra of weight $\lambda$ and $(V, \rho^L, \rho^R, K_V)$  be a representation of it. Define maps $\rho_K^L, \rho_K^R: \frakg \rightarrow$ End$(V)$ by
\begin{eqnarray}
&&\rho^L_K(x)v:=\rho^L(K(x)) v-K_V(\rho^L(x)v),\nonumber\\
&& \rho^R_K(x)v:=\rho^R(K(x)) v-K_V(\rho^R(x)v)
\end{eqnarray}
for any $x\in \frakg, v\in V$. Then $\rho^L_K, \rho^R_K$  defines a representation of the Leibniz algebra $(\frakg_K ,[\cdot, \cdot]_K)$ on $V$. Moreover, $(V, \rho_K^L, \rho_K^R, K_V)$ is a representation
of the modified Rota-Baxter Leibniz algebra $(\frakg_K ,[\cdot, \cdot]_K, K)$ of weight $\lambda$.

\end{thm}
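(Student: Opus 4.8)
The plan is to verify the three Leibniz-representation axioms for the pair $(\rho_K^L, \rho_K^R)$ with respect to the new bracket $[\cdot,\cdot]_K$, and then separately check the two modified Rota-Baxter representation identities \eqref{Def: Rota-Baxter representations} for $K_V$. For the first part, fix $x, y \in \frakg$ and $v \in V$, and expand $\rho_K^L([x,y]_K)v = \rho^L(K[x,y]_K)v - K_V(\rho^L([x,y]_K)v)$. Using $[x,y]_K = [K(x),y]_\frakg + [x,K(y)]_\frakg$ together with the modified Rota-Baxter relation \eqref{Eq: Rota-Baxter relation} to rewrite $K[x,y]_K = [K(x),K(y)]_\frakg - \lambda[x,y]_\frakg$, I would substitute into $\rho^L(K[x,y]_K)v$ and apply the original Leibniz-representation axioms for $(\rho^L,\rho^R)$ to turn $\rho^L([K(x),K(y)]_\frakg)$ into a commutator $\rho^L(K(x))\rho^L(K(y)) - \rho^L(K(y))\rho^L(K(x))$. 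In parallel I would expand $\rho_K^L(x)\rho_K^L(y)v - \rho_K^L(y)\rho_K^L(x)v$ fully, getting four kinds of terms: double-$\rho^L(K(-))$ terms, mixed terms with one $K_V$ and one $\rho^L(K(-))$, and double-$K_V$ terms. The matching of these two expansions is exactly where the defining identities of a modified Rota-Baxter representation (Definition \ref{Def: Rota-Baxter representations}) must be invoked to convert things like $\rho^L(K(x))K_V(\rho^L(y)v)$ into $K_V$-of-something plus a $\lambda$-term; the leftover $\lambda$-contributions should then be precisely accounted for by the $-\lambda[x,y]_\frakg$ correction and the original Leibniz axiom $\rho^L([x,y]_\frakg) = \rho^L(x)\rho^L(y) - \rho^L(y)\rho^L(x)$. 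The same bookkeeping, with the appropriate mixed axioms $\rho^R([x,y]_\frakg) = \rho^L(x)\rho^R(y) - \rho^R(y)\rho^L(x)$ and $\rho^R([x,y]_\frakg) = \rho^L(x)\rho^R(y) + \rho^R(y)\rho^R(x)$, handles the two axioms involving $\rho_K^R$.

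An alternative, cleaner route that I would actually prefer is to exploit the semidirect product already constructed: by the preceding Proposition, $\frakg \oplus V$ with bracket $[x+u,y+v]_{\frakg\oplus V} = [x,y]_\frakg + \rho^L(x)v + \rho^R(y)u$ and operator $K\oplus K_V$ is itself a modified Rota-Baxter Leibniz algebra of weight $\lambda$. Applying Proposition \ref{Prop: new RB 3-Lie algebra} to this semidirect product yields a new Leibniz algebra $(\frakg\oplus V)_{K\oplus K_V}$ with bracket $[\cdot,\cdot]_{K\oplus K_V}$, and by part (ii) of that Proposition the operator $K\oplus K_V$ is again modified Rota-Baxter of weight $\lambda$ on it. One then computes directly that $[x+u, y+v]_{K\oplus K_V} = (K\oplus K_V)(x+u)\cdot(y+v) + (x+u)\cdot(K\oplus K_V)(y+v)$ (product in $\frakg\oplus V$), and the $\frakg$-component of this is $[x,y]_K$ while the $V$-component works out to $\rho_K^L(x)v + \rho_K^R(y)u$ after reorganizing $\rho^L(K(x))v + \rho^L(x)K_V(v)$ — however, I note the signs: the theorem's formula has $\rho_K^L(x)v = \rho^L(K(x))v - K_V(\rho^L(x)v)$, which does not literally match the semidirect-product computation, so this shortcut only works up to recognizing that $(\frakg\oplus V)_{K\oplus K_V}$ is itself a semidirect product of $\frakg_K$ by some representation, and that representation is the one given by a sign-twisted version of the naive formula. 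Because of this sign subtlety I would present the direct verification as the main proof and perhaps remark on the semidirect-product interpretation afterward.

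The main obstacle is the first part — establishing that $(\rho_K^L, \rho_K^R)$ is a Leibniz representation of $\frakg_K$ — because it requires correctly orchestrating three separate streams of identities (the original Leibniz-representation axioms, the modified Rota-Baxter operator relation on $\frakg$, and the modified Rota-Baxter representation relations for $K_V$) and tracking the $\lambda$-weighted terms so that they cancel. The bookkeeping is analogous to, but strictly more involved than, the computation in the proof of Proposition \ref{Prop: new RB 3-Lie algebra}(i), where one sees the same pattern of using \eqref{Eq: Rota-Baxter relation} to trade $K[x,y]_K$ for $[K(x),K(y)]_\frakg - \lambda[x,y]_\frakg$ and then invoking the Leibniz identity. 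By contrast, the "Moreover" clause — that $(V, \rho_K^L, \rho_K^R, K_V)$ is a representation of the modified Rota-Baxter Leibniz algebra $(\frakg_K, [\cdot,\cdot]_K, K)$ — should be comparatively routine: one must check the two identities of Definition \ref{Def: Rota-Baxter representations} with $\rho^L, \rho^R$ replaced by $\rho_K^L, \rho_K^R$ and the bracket replaced by $[\cdot,\cdot]_K$, and this follows by substituting the definitions of $\rho_K^L, \rho_K^R$, using $K$-relation \eqref{Eq: Rota-Baxter relation} once more on the $\frakg$ side, and then reducing everything to the already-assumed identities for $K_V$ via straightforward algebra, exactly mirroring the proof of Proposition \ref{Prop: new RB 3-Lie algebra}(ii).
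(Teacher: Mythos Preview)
Your proposal is correct and follows essentially the same approach as the paper: a direct verification of the Leibniz-representation axioms for $(\rho_K^L,\rho_K^R)$ by expanding both sides, invoking the modified Rota-Baxter identity \eqref{Eq: Rota-Baxter relation} to rewrite $K[x,y]_K$, the original representation axioms for $(\rho^L,\rho^R)$, and the $K_V$-identities of Definition~\ref{Def: Rota-Baxter representations} to absorb the mixed $\rho^L(K(-))K_V(-)$ terms and cancel the $\lambda$-contributions; the ``Moreover'' clause is then handled exactly as you describe, mirroring Proposition~\ref{Prop: new RB 3-Lie algebra}(ii). The only organizational difference is that the paper starts from the commutator $\rho_K^L(x)\rho_K^L(y)v-\rho_K^L(y)\rho_K^L(x)v$ and reduces it to $\rho_K^L([x,y]_K)v$, and for the third axiom uses the equivalent form $\rho_K^R(y)\big(\rho_K^L(x)+\rho_K^R(x)\big)v=0$; your observation that the semidirect-product shortcut fails on the nose because of the sign in $\rho_K^L$ is also correct.
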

\begin{proof} One can show that  $\rho_K^L, \rho_K^R$  define a representation of the Leibniz algebra $(\frakg_K ,[\cdot, \cdot]_K)$ on $V$. In fact, we have
\begin{eqnarray*}
&&\rho_K^L(x) \rho_K^L(y)v-\rho_K^L(y) \rho_K^L(x)v\\
&=& \rho_K^L(x)(\rho^L(K(y)) v-K_V(\rho^L(y)v))-\rho_K^L(y)(\rho^L(K(x)) v-K_V(\rho^L(x)v))\\
&=& \rho^L(K(x))(\rho^L(K(y)) v-K_V(\rho^L(y)v))-K_V(\rho^L(x)(\rho^L(K(y)) v-\rho^L(x)K_V(\rho^L(y)v))\\
&& \rho^L(K(y))(\rho^L(K(x)) v-K_V(\rho^L(x)v))-K_V(\rho^L(y)(\rho^L(K(x)) v-\rho^L(y)K_V(\rho^L(x)v))\\
&=& \rho^L(K(x))\rho^L(K(y))v-K_V(\rho^L(K(x))\rho^L(y)v+\rho^L(x)K_V(\rho^L(y)v)-K_V(\rho^L(x)\rho^L(y)v))\\
&&-K_V(\rho^L(x)\rho^L(K(y))v-\rho^L(x)K_V(\rho^L(y)v))-\rho^L(K(y))\rho^L(K(x))v\\
&&+K_V(\rho^L(K(y))\rho^L(x)v+\rho^L(y)K_V(\rho^L(x)v)-K_V(\rho^L(y)\rho^L(x)v))\\
&&-K_V(\rho^L(y)\rho^L(K(x))v-\rho^L(y)K_V(\rho^L(x)v))\\
&=& \rho^L([K(x), K(y)]_\frakg)v-K_V(\rho^L([K(x), y]_\frakg+[x, K(y)]_\frakg)v)-\lambda\rho^L([x, y]_\frakg)\\
&=& \rho^L([x, y]_K)v-K_V(\rho^L([x, y]_K)v)\\
&=& \rho^L_K([x, y]_K)v.
\end{eqnarray*}
Similar to prove that
\begin{eqnarray*}
\rho_K^R([x, y]_K)=\rho_K^L (x)  \rho_K^R (y) - \rho_K^R (y)  \rho_K^L (x).
\end{eqnarray*}
 Furthermore, we have
\begin{eqnarray*}
 &&\rho_K^R(y) \rho_K^L(x)v+\rho_K^R(y) \rho_K^R(x)v\\
 &=&  \rho_K^R(y)(\rho^L(K(x)) v-K_V(\rho^L(x)v))+\rho_K^R(y)(\rho^R(K(x)) v-K_V(\rho^R(x)v) )\\
 &=& \rho^R(K(y))(\rho^L(K(x)) v-K_V(\rho^L(x)v))-K_V(\rho^R(y)(\rho^L(K(x)) v-K_V(\rho^L(x)v)))\\
 &&+\rho^R(K(y))(\rho^R(K(x)) v-K_V(\rho^R(x)v))-K_V(\rho^R(y)(\rho^R(K(x)) v-K_V(\rho^R(x)v)))\\
 &=& 0.
\end{eqnarray*}
 Moreover, we have
\begin{eqnarray*}
&& \rho^L_K(K(x))K_{V}(v)\\
&=& \rho^L(K^2(x))K_{V}(v)-K_V(\rho^L(K(x)) K_{V}(V))\\
&=& K_V\big(\rho^L(K^2(x)) v +\rho^L(N(x))N_V(v)\big)+\lambda\rho^L(K(x))v\\
&&-K^2_V\big(\rho^L(K(x)) v +\rho^L(x)K_V(v)\big)-\lambda K_V(\rho^L(x)v)\\
&=& K_V\big(\rho^L_K(K(x)) v +\rho^L_K(x)K_V(v)\big)+\lambda\rho^L_K(x)v.
\end{eqnarray*}
Therefore,  $(V, \rho_K^L, \rho_K^R, K_V)$ is a representation
of the  modified Rota-Baxter Leibniz algebra $(\frakg_K , [\cdot, \cdot]_K, K)$.
\end{proof}

\bigskip

\section{Cohomology theory of  modified Rota-Baxter Leibniz algebras} \label{Sect: Cohomology theory of Rota-Baxter 3-Lie algebras}
In this  section, we will define a cohomology theory for   modified Rota-Baxter Leibniz algebras.

\subsection{Cohomology of modified Rota-Baxter  operators}\
\label{Subsect: cohomology RB operator}

Firstly, let's study the cohomology of modified Rota-Baxter operators.

 Let $(\frakg , [\cdot, \cdot]_\frakg , K)$ be a modified Rota-Baxter Leibniz algebra and $(V, \rho^L, \rho^R, K_V)$ be a representation over it. Recall that
Proposition~\ref{Prop: new RB 3-Lie algebra}  and Proposition~\ref{Prop: new-representation}  give a new
Leibniz algebra   $\frakg_K$ and
  a new  representation  $V$ over $\frakg_K $.
 Consider the  cochain complex of $\frakg _K $ with
 coefficients in $V$:
 $$\C^\bullet_{\PLA}(\frakg _K, V)=\bigoplus\limits_{n=0}^\infty \C^n_{\PLA}(\frakg_K , V).$$
  More precisely,  for $n\geqslant 0$,  $ \C^n_{\PLA}(\frakg _K , V)=\Hom  (\mathfrak{g}^{\otimes n}, V)$ and its differential $$\partial^n:
 \C^n_{\mathrm{Leib}}(\frakg _K,\  V)\rightarrow  \C^{n+1}_{\mathrm{Leib}}(\frakg _K , V) $$ is defined as:
\begin{align*}
&(\partial^n f)(x_1,\ldots, x_{n+1})\\
&=\sum^{n}_{i=1}(-1)^{i+1}\rho^L_K(x_i)f(x_1,\ldots, \hat{x}_i, \ldots, x_{n+1}) + (-1)^{n+1}\rho^R_K(x_{n+1})f(x_1,\ldots,  x_{n})\\
&+\sum_{1\leq i< j\leq n+1} (-1)^{i}f(x_1,\ldots, \hat{x}_i, \ldots, x_{j-1}, [x_i, x_j]_K, x_{j+1},\ldots, x_{n+1}),
\end{align*}
for $x_1, \ldots, x_{n+1}\in \frakg$.
 \smallskip

 \begin{defn}
 	Let $(\frakg , [\cdot, \cdot]_\frakg , K)$ be a modified Rota-Baxter Leibniz algebra and $(V, \rho^L, \rho^R, K_V)$ be a representation over it.  Then the cochain complex $(\C^\bullet_\PLA(\frakg _K,  V),\partial)$ is called the cochain complex of modified Rota-Baxter operator $K$ with coefficients in $(V, \rho^L_K, \rho^R_K, K_V)$,  denoted by $C_{\RBO}^\bullet(\frakg , V)$. The cohomology of $C_{\RBO}^\bullet(\frakg ,V)$, denoted by $\mathrm{H}_{\RBO}^\bullet(\frakg ,V)$, are called the cohomology of  modified Rota-Baxter operator $K$ with coefficients in $(V, \rho^L_K, \rho^R_K, K_V)$.
 	
 	 When $(V, \rho^L, \rho^R, K_V)$ is the regular representation  $ (\frakg , [\cdot, \cdot]_\frakg,   K)$, we denote $\C^\bullet_{\RBO}(\frakg ,\frakg )$ by $\C^\bullet_{\RBO}(\frakg )$ and call it the cochain complex of  modified Rota-Baxter operator $K$, and denote $\rmH^\bullet_{\RBO}(\frakg ,\frakg )$ by $\rmH^\bullet_{\RBO}(\frakg )$ and call it the cohomology of  modified Rota-Baxter operator $K$.
 \end{defn}

\subsection{Cohomology of modified Rota-Baxter Leibniz algebras}\
\label{Subsec:chomology RB}

In this subsection, we will combine the  cohomology of   Leibniz algebras and the cohomology of modified Rota-Baxter operators to define a cohomology theory for modified Rota-Baxter Leibniz algebras.

Let $V=(V, \rho^L, \rho^R, K_V)$ be a representation over a modified Rota-Baxter Leibniz algebra   $\frakg =(\frakg ,\mu=[\cdot, \cdot]_\frakg,K)$. Now, let's construct a chain map   $$\Phi^\bullet:\C^\bullet_{\PLA}(\frakg ,V) \rightarrow C_{\RBO}^\bullet(\frakg ,V),$$ i.e., the following commutative diagram:
\[\xymatrix{
		\C^0_{\PLA}(\frakg ,V)\ar[r]^-{\delta^0}\ar[d]^-{\Phi^0}& \C^1_{\PLA}(\frakg,V)\ar@{.}[r]\ar[d]^-{\Phi^1}&\C^n_{\PLA}(\frakg,V)\ar[r]^-{\delta^n}\ar[d]^-{\Phi^n}&\C^{n+1}_{\PLA}(\frakg,V)\ar[d]^{\Phi^{n+1}}\ar@{.}[r]&\\
		\C^0_{\RBO}(\frakg,V)\ar[r]^-{\partial^0}&\C^1_{\RBO}(\frakg,V)\ar@{.}[r]& \C^n_{\RBO}(\frakg,V)\ar[r]^-{\partial^n}&\C^{n+1}_{\RBO}(\frakg,V)\ar@{.}[r]&
.}\]

Define $\Phi^0=\Id_{\Hom(k,V)}=\Id_V$, and for  $n\geqslant 1$ and $ f\in \C^n_{\PLA}(\frakg,V)$,  define $\Phi^n(f)\in \C^n_{\RBO}(\frakg,V)$ as:
\begin{align*}
  \Phi^n(f)(x_1,\ldots, x_{n}) =&f(K(x_1), \ldots, K(x_n))\\
 &-\sum_{1\leqslant i_1<\cdots< i_{r}\leqslant n,~~ r ~~\mathrm{odd}} (-\lambda)^{\frac{r-1}{2}} K_V\circ f (K(x_1), \ldots, x_{i_1}, \ldots, x_{i_2},\ldots, x_{i_r},\ldots, K(x_n)) \\
 & -\sum_{1\leqslant i_1< \cdots< i_{r}\leqslant n,~~ r ~~\mathrm{even}}  (-\lambda)^{\frac{r}{2}+1} K_V\circ f (K(x_1), \ldots, x_{i_1}, \ldots, x_{i_2},\ldots, x_{i_r},\ldots, K(x_n)).
\end{align*}

\smallskip

\begin{prop}\label{Prop: Chain map Phi}
	The map $\Phi^\bullet: \C^\bullet_\PLA(\frakg,V)\rightarrow \C^\bullet_{\RBO}(\frakg,V)$ is a chain map.
\end{prop}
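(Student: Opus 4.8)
The plan is to verify directly that $\partial^n \circ \Phi^n = \Phi^{n+1} \circ \delta^n$ for every $n \geq 0$ by expanding both sides and matching terms. The key structural observation is that $\Phi^n$ is built from three pieces: the ``bare'' term $f(K(x_1),\ldots,K(x_n))$, an odd-index-selection sum with coefficients $(-\lambda)^{(r-1)/2}$ and a prefactor $K_V$, and an even-index-selection sum with coefficients $(-\lambda)^{r/2+1}$ and a prefactor $K_V$. When we apply $\partial^n$ (which uses $\rho^L_K, \rho^R_K, [\cdot,\cdot]_K$) to $\Phi^n(f)$, we should substitute the definitions $\rho^L_K(x)v = \rho^L(K(x))v - K_V(\rho^L(x)v)$, $\rho^R_K(x)v = \rho^R(K(x))v - K_V(\rho^R(x)v)$ and $[x,y]_K = [K(x),y]_\frakg + [x,K(y)]_\frakg$, and then compare with $\Phi^{n+1}(\delta^n f)$, where $\delta^n$ uses the ordinary $\rho^L,\rho^R,[\cdot,\cdot]_\frakg$ but the arguments are hit with $K$'s according to the $\Phi^{n+1}$ recipe.

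First I would treat the low-degree cases $n=0$ and $n=1$ by hand to fix signs and conventions; $n=0$ essentially reduces to the defining identities in Definition~\ref{Def: Rota-Baxter representations} for $K_V$ together with the modified Rota-Baxter relation~\eqref{Eq: Rota-Baxter relation}, and $n=1$ already exhibits the interaction between the ``bare'' term and the single-index-selection term. For general $n$, the bookkeeping device I would use is to organize the terms of $\Phi^n(f)$ by the \emph{set} $S \subseteq \{1,\ldots,n\}$ of positions at which the argument is left as $x_i$ rather than $K(x_i)$ (with $S = \emptyset$ giving the bare term), so that $\Phi^n(f) = \sum_{S} c(S)\, K_V^{\epsilon(S)} f\big(\ldots\big)$ with explicit coefficients $c(S)$ depending only on $|S|$ and parity. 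Then applying $\partial^n$ produces, for each $S$ and each structural term of $\delta$, several contributions: the $\rho^L(K(x_i))$ and $\rho^R(K(x_{n+1}))$ parts (where the operator argument carries a $K$), the $-K_V\rho^L(x_i)$ and $-K_V\rho^R(x_{n+1})$ parts (where it does not), and from the bracket term $f(\ldots,[x_i,x_j]_K,\ldots)$ the two pieces $[K(x_i),x_j]_\frakg$ and $[x_i,K(x_j)]_\frakg$. One then checks that, after reindexing, these reassemble exactly into the terms of $\Phi^{n+1}(\delta^n f)$ organized by the analogous position-set $S' \subseteq \{1,\ldots,n+1\}$, with the coefficient identities $(-\lambda)^{(r-1)/2}$ versus $(-\lambda)^{r/2+1}$ and the powers of $K_V$ matching up because inserting a new ``un-$K$'d'' slot flips the parity of $|S|$ and either introduces or absorbs one factor of $K_V$, while the $+\lambda\rho^{L/R}(x)v$ correction terms in $\partial$ account precisely for the change $(-\lambda)^{(r-1)/2} \leftrightarrow (-\lambda)^{r/2+1}$ between the odd and even sums.

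The main obstacle will be the combinatorial matching of coefficients and $K_V$-powers across the parity change: one must carefully track how a term indexed by a subset $S$ of size $r$ in degree $n$ contributes to terms indexed by subsets of size $r$ and $r+1$ in degree $n+1$, and verify that the $\lambda$-powers produced by the modified Rota-Baxter relation (which turns $[K(x),K(y)]_\frakg$ into $K(\ldots) + \lambda[x,y]_\frakg$) exactly compensate the difference in the prescribed coefficients. A secondary subtlety is the right-most slot: the term $(-1)^{n+1}\rho^R_K(x_{n+1})$ and the possibility that $x_{n+1}$ is or is not in the selection set must be handled consistently with how $\Phi^{n+1}$ treats the last argument. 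I expect that once the indexing by subsets $S$ is set up, each cancellation is local and the verification, though lengthy, is mechanical; I would present the $S=\emptyset$ stream and one generic nonempty $S$ in detail and indicate that the remaining cases are identical.
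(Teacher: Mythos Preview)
The paper gives no proof of this proposition: it is stated without argument and the text passes directly to the next definition. So there is nothing to compare against, and your direct-verification plan (organizing $\Phi^n$ by the subset $S\subseteq\{1,\ldots,n\}$ of ``un-$K$'d'' slots, expanding $\partial^n$ via the definitions of $\rho_K^{L/R}$ and $[\cdot,\cdot]_K$, and matching against $\Phi^{n+1}\circ\delta^n$) is the natural route and is essentially the only one available.

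Your outline is sound. One small correction of phrasing: there are no ``$+\lambda\rho^{L/R}(x)v$ correction terms in $\partial$'' as such; the differential $\partial$ is built only from $\rho_K^{L/R}$ and $[\cdot,\cdot]_K$. The $\lambda$-contributions you need appear when you expand, in two places: (i) when $\rho_K^{L/R}(x)$ hits a term already carrying a $K_V$ prefactor, since the representation identity gives
\[
\rho_K^{L/R}(x)\,K_V(v)=\rho^{L/R}(K(x))K_V(v)-K_V\big(\rho^{L/R}(x)K_V(v)\big)=K_V\big(\rho^{L/R}(K(x))v\big)+\lambda\,\rho^{L/R}(x)v,
\]
and (ii) on the other side, when you rewrite $[K(x_i),K(x_j)]_\frakg=K\big([x_i,x_j]_K\big)+\lambda[x_i,x_j]_\frakg$ inside $\Phi^{n+1}(\delta^n f)$. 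Matching the $\lambda$-pieces produced by (i) and (ii) across the two sides, and checking that the $K_V$-prefactored pieces recombine with the correct coefficients $(-\lambda)^{(r-1)/2}$ versus $(-\lambda)^{r/2+1}$, is exactly the combinatorial crux you identify. Carrying out $n=0,1$ explicitly first, as you propose, is a good idea before committing to the general-$S$ argument.
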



\smallskip

\begin{defn}
 Let $V=(V, \rho^L, \rho^R, K_V)$ be a  representation over a modified Rota-Baxter Leibniz algebra   $\frakg =(\frakg ,\mu=[\cdot, \cdot]_\frakg, K)$.  We define the  cochain complex $(\C^\bullet_{\RBA}(\frakg,V), d^\bullet)$  of  modified Rota-Baxter Leibniz algebras $(\frakg,\mu,K)$ with coefficients in $(V, \rho^L, \rho^R, K_V)$ to the negative shift of the mapping cone of $\Phi^\bullet$, that is,   let
\[\C^0_{\RBA}(\frakg,V)=\C^0_\PLA(\frakg,V)  \quad  \mathrm{and}\quad   \C^n_{\RBA}(\frakg,V)=\C^n_\PLA(\frakg,V)\oplus \C^{n-1}_{\RBO}(\frakg,V), \forall n\geqslant 1,\]
 and the differential $d^n: \C^n_{\RBA}(\frakg,V)\rightarrow \C^{n+1}_{\RBA}(\frakg,V)$ is given by \[d^n(f,g)= (\delta^n(f), -\partial^{n-1}(g)  -\Phi^n(f))\]
 for any $f\in \C^n_\PLA(\frakg,V)$ and $g\in \C^{n-1}_{\RBO}(\frakg,V)$.
The  cohomology of $(\C^\bullet_{\RBA}(\frakg,V), d^\bullet)$, denoted by $\rmH_{\RBA}^\bullet(\frakg,V)$,  is called the cohomology of the modified Rota-Baxter Leibniz algebra $(\frakg,\mu,K)$ with coefficients in $(V, \rho^L, \rho^R, K_V)$.
When $(V, \rho^L, \rho^R, K_V)=(\frakg,\mu, K)$, we just denote $\C^\bullet_{\RBA}(\frakg,\frakg), \rmH^\bullet_{\RBA}(\frakg,\frakg)$   by $\C^\bullet_{\RBA}(\frakg),  \rmH_{\RBA}^\bullet(\frakg)$ respectively, and call  them the cochain complex, the cohomology of  modified Rota-Baxter Leibniz algebra $(\frakg,\mu,T)$ respectively.
\end{defn}
There is an obvious short exact sequence of complexes:
\begin{eqnarray}\label{Seq of complexes} 0\to \C^\bullet_{\RBO}(\frakg,V)\to \C^\bullet_{\RBA}(\frakg,V)\to \C^\bullet_{\PLA}(\frakg,V)\to 0\end{eqnarray}
which induces a long exact sequence of cohomology groups
$$0\to \rmH^{0}_{\RBA}(\frakg, V)\to\mathrm{H}^0_{\mathrm{Leib}}(\frakg, V)\to\rmH^0_{\RBO}(\frakg, V) \to \rmH^{1}_{\RBA}(\frakg, V)\to\mathrm{H}^1_{\mathrm{Leib}}(\frakg, V)\to\pl s$$
$$\cdots\to \mathrm{H}^p_{\mathrm{Leib}}(\frakg, V)\to \rmH^p_{\RBO}(\frakg, V)\to \rmH^{p+1}_{\RBA}(\frakg, V)\to \mathrm{H}^{p+1}_{\mathrm{Leib}}(\frakg, V)\to \cdots$$


\bigskip


\bigskip

\section{Formal deformations of modified Rota-Baxter Leibniz algebras}

In this section, we will study formal deformations of modified Rota-Baxter Leibniz algebras and interpret  them  via    lower degree   cohomology groups  of modified Rota-Baxter Leibniz algebras defined in last section.

Let $(\frakg,\mu, K)$ be a modified Rota-Baxter Leibniz  algebra.   Consider a 1-parameterized family:
\[\mu_t=\sum_{i=0}^\infty \mu_it^i, \ \mu_i\in \C^2_\PLA(\frakg),\quad  K_t=\sum_{i=0}^\infty K_it^i,  \ K_i\in \C^1_{\RBO}(\frakg).\]

\begin{defn}
	A  1-parameter formal deformation of   modified Rota-Baxter Leibniz algebra $(\frakg, \mu,K)$ is a pair $(\mu_t,K_t)$ which endows the flat $\bfk[[t]]$-module $\frakg[[t]]$ with a modified Rota-Baxter Leibniz algebra structure over $\bfk[[t]]$ such that $(\mu_0,K_0)=(\mu,K)$.
\end{defn}

 Power series $\mu_t$ and $ K_t$ determine a  1-parameter formal deformation of modified Rota-Baxter Leibniz algebra $(\frakg,\mu,K)$ if and only if for any $x, y, z\in \frakg$, the following equations hold :
\begin{align*}
&\mu_t(x,  \mu_t(y, z))= \mu_t(\mu_t(x, y), z)+\mu_t(y,\mu_t(x, z))\\
& \mu_t(K_t(x),  K_t(y))=K_t(\mu_t(K_t(x), y)+\mu_t(x, K_t(y))\big)+\lambda\mu_t(x, y).
 \end{align*}
By expanding these equations and comparing the coefficient of $t^n$, we obtain  that $\{\mu_i\}_{i\geqslant0}$ and $\{K_i\}_{i\geqslant0}$ have to  satisfy: for any $n\geqslant 0$,
\begin{align}\label{Eq: deform eq for  products in RBA}
&\sum_{i+j=n}\mu_i(x, \mu_j(y, z))= \sum_{i+j=n}\mu_i(\mu_j(x, y),z)+\mu_i(y,\mu_j(x, z)),\end{align}
\begin{align}\label{Eq: Deform RB operator in RBA}
&\sum_{i+j+k=n}\mu_i(K_j(x),  K_k(y))=\sum_{i+j+k=n}K_i\big(\mu_j(K_k(x),y)+\mu_j(x,  K_k(y))\big)+\lambda \mu_k(x, y).
\end{align}
Obviously, when $n=0$, the above conditions are exactly the associativity of $\mu=\mu_0$ and Equation~(\ref{Eq: Rota-Baxter relation}) which is the defining relation of a modified Rota-Baxter operator $K=K_0$.

\smallskip

\begin{prop}\label{Prop: Infinitesimal is 2-cocyle}
	Let $(\frakg[[t]],\mu_t,K_t)$ be a  1-parameter formal deformation of a modified Rota-Baxter Leibniz algebra $(\frakg,\mu,K)$. Then
	$(\mu_1,K_1)$ is a 1-cocycle in the cochain complex
	$C_{\RBA}^\bullet(\frakg)$.
\end{prop}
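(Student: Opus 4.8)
The plan is to extract the $n=1$ component of the two deformation equations~\eqref{Eq: deform eq for  products in RBA} and~\eqref{Eq: Deform RB operator in RBA} and match them term-by-term against the differential $d^1$ of the cochain complex $C^\bullet_{\RBA}(\frakg)$. Recall that $C^1_{\RBA}(\frakg)=C^1_\PLA(\frakg)\oplus C^0_{\RBO}(\frakg)$ and $d^1(f,g)=(\delta^1 f,\,-\partial^0 g-\Phi^1 f)$, so to say that $(\mu_1,K_1)$ is a $1$-cocycle means precisely the two equations $\delta^1(\mu_1)=0$ in $C^2_\PLA(\frakg)$ and $\partial^0(K_1)+\Phi^1(\mu_1)=0$ in $C^1_{\RBO}(\frakg)$. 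Here I read $\mu_1\in C^2_\PLA(\frakg)=\Hom(\frakg^{\otimes 2},\frakg)$ and $K_1\in C^1_{\RBO}(\frakg)=\Hom(\frakg,\frakg)$, with $\frakg$ viewed as the regular representation of itself.

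First I would handle the Leibniz part. Setting $n=1$ in~\eqref{Eq: deform eq for  products in RBA} and using $\mu_0=\mu=[\cdot,\cdot]_\frakg$ gives
\[
\mu_1(x,[y,z]_\frakg)+[x,\mu_1(y,z)]_\frakg
=\mu_1([x,y]_\frakg,z)+[\mu_1(x,y),z]_\frakg+\mu_1(y,[x,z]_\frakg)+[y,\mu_1(x,z)]_\frakg,
\]
which is exactly the statement that $\delta^1(\mu_1)=0$ for the Loday--Pirashvili coboundary with the regular representation $\rho^L(x)=L_x$, $\rho^R(x)=R_x$; one just rearranges the six terms and checks signs against the formula for $\delta^n$ recalled in Section~1.

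Next I would handle the modified Rota--Baxter part. Setting $n=1$ in~\eqref{Eq: Deform RB operator in RBA}, the triple sum $i+j+k=1$ produces the terms with exactly one index equal to $1$ and the rest $0$; writing $\mu_0=\mu$, $K_0=K$ and collecting, the left side contributes $\mu_1(K(x),K(y))+[K_1(x),K(y)]_\frakg+[K(x),K_1(y)]_\frakg$ and the right side contributes $K_1([K(x),y]_\frakg+[x,K(y)]_\frakg)+K(\mu_1(K(x),y)+\mu_1(x,K(y)))+K([K_1(x),y]_\frakg+[x,K_1(y)]_\frakg)+\lambda\mu_1(x,y)$. I then need to recognize that, after moving everything to one side, this is precisely $(\partial^0 K_1)(x,y)+\Phi^1(\mu_1)(x,y)=0$. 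Here $\partial^0$ is the differential of $C^\bullet_{\RBO}(\frakg)$, i.e.\ the Leibniz $1$-cocycle condition for $K_1$ with respect to the \emph{twisted} structure $\frakg_K$ and the twisted representation $(\rho^L_K,\rho^R_K,K_V=K)$ of Theorem~\ref{Prop: new-representation}; unwinding the definitions $\rho^L_K(x)v=[K(x),v]_\frakg-K[x,v]_\frakg$, $[x,y]_K=[K(x),y]_\frakg+[x,K(y)]_\frakg$ turns $(\partial^0 K_1)(x,y)$ into exactly the $K_1$-terms above. Likewise $\Phi^1$, applied to $f=\mu_1\in C^1$ in the explicit formula from just before Proposition~\ref{Prop: Chain map Phi} (the $r=1$ term with $(-\lambda)^0$ contributes $-K_V\circ\mu_1(\cdots)$ in two ways, and the leading term contributes $\mu_1(K(x),K(y))$), produces precisely $\mu_1(K(x),K(y))-K(\mu_1(K(x),y))-K(\mu_1(x,K(y)))$, and the extra $\lambda\mu_1(x,y)$ is accounted for by the weight term. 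Matching these is the crux of the proof.

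The main obstacle I anticipate is purely bookkeeping: getting the signs and the identification of $\Phi^1$ and $\partial^0$ exactly right, since $\Phi^1$ is only defined through a general-$n$ formula and one must instantiate it carefully at $n=1$ (checking that the ``even $r$'' sum is empty and the ``odd $r$'' sum contributes only $r=1$), and since $\partial^0$ is the differential for the \emph{twisted} Leibniz algebra $\frakg_K$ rather than $\frakg$ itself, so one must substitute $[\cdot,\cdot]_K$ and $\rho^L_K,\rho^R_K$ correctly. There is no conceptual difficulty: once the dictionary between the two sides is written down, the equality is forced by~\eqref{Eq: Deform RB operator in RBA}. I would therefore organize the write-up as: (1) state what ``$1$-cocycle'' unpacks to; (2) derive the two $n=1$ equations; (3) identify each with the corresponding component of $d^1(\mu_1,K_1)=0$.
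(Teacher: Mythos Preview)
Your approach is exactly the paper's: take the $n=1$ coefficient of the two deformation equations and identify the resulting identities with the two components of the cocycle condition. The only issue is a systematic off-by-one in all your indices. The pair $(\mu_1,K_1)$ lives in
\[
C^2_{\RBA}(\frakg)=C^2_{\PLA}(\frakg)\oplus C^1_{\RBO}(\frakg),
\]
not in $C^1_{\RBA}(\frakg)$, so the relevant differential is $d^2(\mu_1,K_1)=(\delta^2\mu_1,\,-\partial^1 K_1-\Phi^2\mu_1)$. Accordingly, the Leibniz identity you wrote down (with three arguments $x,y,z$) is $\delta^2(\mu_1)=0$, not $\delta^1(\mu_1)=0$; the Rota--Baxter identity (with two arguments $x,y$) is $\partial^1(K_1)+\Phi^2(\mu_1)=0$, not $\partial^0(K_1)+\Phi^1(\mu_1)=0$; and when you instantiate $\Phi^n$ you are taking $n=2$, so the even-$r$ sum is \emph{not} empty (it has the single term $r=2$, which is precisely what supplies the $\lambda\,\mu_1(x,y)$ contribution you described as ``the weight term''). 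The statement in the paper itself says ``1-cocycle'', but this is a typo: the paper's own proof correctly concludes that $(\mu_1,K_1)$ is a $2$-cocycle, and everything you computed is consistent with that once the superscripts are shifted.
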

\begin{proof} When $n=1$,   Equations~(\ref{Eq: deform eq for  products in RBA}) and (\ref{Eq: Deform RB operator in RBA})  become
	 \begin{align*}
&[x,  \mu_1(y, z)]_\frakg+\mu_1(x, [y, z]_\frakg)=[\mu_1(x, y),z]_\frakg+[y,\mu_1(x, z)]_\frakg+\mu_1([x, y]_\frakg,z)+\mu_1(y,[x, z]_\frakg),
\end{align*}
and
\begin{align*}
&[K_1(x),  K(y)]_\frakg+[K(x), K_1(y)]_\frakg+\mu_1(K(x),  K(y))\\
&=K_1\big([K(x), y]_\frakg+[ x, K(y)]_\frakg\big)+\lambda[x, y]_\frakg+K\big(\mu_1(K(x), y)+\mu_1( x, K(y))\big)+\lambda[x, y]_\frakg\\
&+K\big([K_1(x), y]_\frakg+[ x, K_1(y)]_\frakg\big)+\lambda\mu_1(x, y).
\end{align*}
Note that  the first equation is exactly $\delta^2(\mu_1)=0\in \C^\bullet_{\PLA}(\frakg)$ and that  second equation is exactly to  \[\Phi^2(\mu_1)=-\partial^1(K_1) \in \C^\bullet_{\RBO}(\frakg).\]
	So $(\mu_1,K_1)$ is a 2-cocycle in $\C^\bullet_{\RBA}(\frakg)$.
	\end{proof}

\smallskip

\begin{defn} The 1-cocycle $(\mu_1,K_1)$ is called the infinitesimal of the 1-parameter formal deformation $(\frakg[[t]],\mu_t,K_t)$ of a modified Rota-Baxter Leibniz algebra $(\frakg,\mu,K)$.
	\end{defn}

\smallskip
\begin{defn}
Let $(\frakg[[t]],\mu_t,K_t)$ and $(\frakg[[t]],\mu_t',K_t')$ be two 1-parameter formal deformations of a modified Rota-Baxter Leibniz algebra $(\frakg,\mu,K)$. A formal isomorphism from $(\frakg[[t]],\mu_t',K_t')$ to $(\frakg[[t]], \mu_t, K_t)$ is a power series $\psi_t=\sum_{i=0}\psi_it^i: \frakg[[t]]\rightarrow \frakg[[t]]$, where $\psi_i: \frakg\rightarrow \frakg$ are linear maps with $\psi_0=\Id_\frakg$, such that:
\begin{eqnarray}\label{Eq: equivalent deformations}\psi_t\circ \mu_t' &=& \mu_t\circ (\psi_t\ot \psi_t),\\
\psi_t\circ K_t'&=&K_t\circ\psi_t. \label{Eq: equivalent deformations2}
	\end{eqnarray}
	In this case, we say that the two 1-parameter formal deformations $(\frakg[[t]], \mu_t,K_t)$ and
	$(\frakg[[t]],\mu_t',K_t')$ are  equivalent.
\end{defn}

\smallskip

Given a modified Rota-Baxter Leibniz  algebra $(\frakg,\mu,K)$, the power series $\mu_t,K_t$
with $\mu_i=\delta_{i,0}\mu, K_i=\delta_{i,0}K$ make
$(\frakg[[t]],\mu_t,K_t)$ into a $1$-parameter formal deformation of
$(\frakg,\mu,K)$. Formal deformations equivalent to this one are called trivial.
\smallskip

\begin{thm}
The infinitesimals of two equivalent 1-parameter formal deformations of $(\frakg,\mu,K)$ are in the same cohomology class in $\rmH^\bullet_{\RBA}(\frakg)$.
\end{thm}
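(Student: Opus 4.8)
The plan is to take two equivalent deformations $(\mu_t, K_t)$ and $(\mu_t', K_t')$ linked by a formal isomorphism $\psi_t = \mathrm{id}_\frakg + \psi_1 t + \cdots$, and to show that the difference of infinitesimals $(\mu_1 - \mu_1', K_1 - K_1')$ equals $d^1$ of the pair $(\psi_1, 0) \in \C^1_{\RBA}(\frakg) = \C^1_\PLA(\frakg)$ (recall $\C^1_{\RBA}(\frakg) = \C^1_\PLA(\frakg) \oplus \C^0_{\RBO}(\frakg)$, but the second summand sits in degree $0$, so a $1$-cochain here is determined by $\psi_1$ alone together with a $0$-chain which we take to be $0$; more precisely one should track the bookkeeping of the mapping cone and write the coboundary of $(\psi_1, \cdot)$). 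By the definition of $d$, we have $d^1(\psi_1, 0) = (\delta^1(\psi_1),\, -\Phi^1(\psi_1))$, so the claim reduces to the two identities $\mu_1 - \mu_1' = \delta^1(\psi_1)$ in $\C^\bullet_\PLA(\frakg)$ and $K_1 - K_1' = \Phi^1(\psi_1)$ in $\C^\bullet_{\RBO}(\frakg)$ (up to signs, which I will fix when unwinding conventions).

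First I would expand Equation~(\ref{Eq: equivalent deformations}), $\psi_t \circ \mu_t' = \mu_t \circ (\psi_t \otimes \psi_t)$, and collect the coefficient of $t$. This yields
\[
\psi_1(\mu(x,y)) + \mu_1'(x,y) = \mu_1(x,y) + \mu(\psi_1(x), y) + \mu(x, \psi_1(y)),
\]
which rearranges to $\mu_1(x,y) - \mu_1'(x,y) = \psi_1([x,y]_\frakg) - [\psi_1(x), y]_\frakg - [x, \psi_1(y)]_\frakg$; comparing with the Loday--Pirashvili coboundary formula from Section~1 (with $n=1$), the right-hand side is exactly $-\delta^1(\psi_1)(x,y)$ (or $\delta^1(\psi_1)(x,y)$ up to the global sign in the cone differential). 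This is the standard Leibniz-algebra part of the argument and presents no difficulty.

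Next I would expand Equation~(\ref{Eq: equivalent deformations2}), $\psi_t \circ K_t' = K_t \circ \psi_t$, and again collect the coefficient of $t$, getting $\psi_1 \circ K + K_1' = K_1 + K \circ \psi_1$, i.e. $K_1 - K_1' = \psi_1 \circ K - K \circ \psi_1$. I then need to check that this agrees with $\Phi^1(\psi_1)$ as defined in Section~3. In degree $1$, the general formula for $\Phi^n$ specializes: the leading term gives $\psi_1(K(x))$, the "$r$ odd" sum with $r=1$ (the only admissible value when $n=1$) contributes $-K_V \circ \psi_1(x)$ with $(-\lambda)^0 = 1$, and the "$r$ even" sum is empty. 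With $K_V = K$ for the regular representation, this gives $\Phi^1(\psi_1)(x) = \psi_1(K(x)) - K(\psi_1(x)) = \psi_1 \circ K - K \circ \psi_1$ applied to $x$, matching the previous line exactly. Then $d^1(\psi_1, 0) = (\delta^1 \psi_1, -\Phi^1 \psi_1)$ equals $\pm(\mu_1 - \mu_1',\, K_1 - K_1')$, so $(\mu_1, K_1)$ and $(\mu_1', K_1')$ are cohomologous in $\rmH^\bullet_{\RBA}(\frakg)$, which is what we wanted.

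The main obstacle is not any hard computation but bookkeeping: getting the degree conventions of the mapping-cone complex right (a $1$-cochain of $\C^\bullet_{\RBA}$ pairs a $1$-cochain of $\C^\bullet_\PLA$ with a $0$-cochain of $\C^\bullet_{\RBO}$, and the infinitesimal $(\mu_1, K_1)$ is declared a "$1$-cocycle" in Proposition~\ref{Prop: Infinitesimal is 2-cocyle} while its proof calls it a "$2$-cocycle" — I would reconcile this shift carefully), and tracking the signs in $d^n(f,g) = (\delta^n f, -\partial^{n-1} g - \Phi^n f)$ so that the coboundary of $(\psi_1, 0)$ lands on the nose on $(\mu_1 - \mu_1', K_1 - K_1')$ rather than its negative. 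Once the indexing is pinned down, both halves follow by a one-line expansion in $t$ as sketched above.
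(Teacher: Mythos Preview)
Your proposal is correct and follows essentially the same route as the paper: expand $\psi_t\circ\mu_t'=\mu_t\circ(\psi_t\otimes\psi_t)$ and $\psi_t\circ K_t'=K_t\circ\psi_t$ at order $t$, identify the resulting differences $\mu_1'-\mu_1$ and $K_1'-K_1$ with $\delta^1(\psi_1)$ and $-\Phi^1(\psi_1)$ respectively, and conclude that $(\mu_1',K_1')-(\mu_1,K_1)=d^1(\psi_1,0)$. Your extra care about the $\Phi^1$ formula and the $1$- vs.\ $2$-cocycle indexing is warranted (the paper is indeed inconsistent there), but does not change the argument.
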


\begin{proof} Let $\psi_t:(\frakg[[t]],\mu_t',K_t')\rightarrow (\frakg[[t]],\mu_t,K_t)$ be a formal isomorphism.
	Expanding the identities and collecting coefficients of $t$, we get from Equations~(\ref{Eq: equivalent deformations}) and (\ref{Eq: equivalent deformations2}):
	\begin{eqnarray*}
		\mu_1'&=&\mu_1+\mu\circ(\Id\ot \psi_1)-\psi_1\circ\mu+\mu\circ( \psi_1\ot\Id),\\
		K_1'&=&K_1+K\circ\psi_1-\psi_1\circ K,
		\end{eqnarray*}
	that is, we have\[(\mu_1',K_1')-(\mu_1,K_1)=(\delta^1(\psi_1), -\Phi^1(\psi_1))=d^1(\psi_1,0)\in  \C^\bullet_{\RBA}(\frakg).\]
\end{proof}

\smallskip

\begin{defn}
	A modified Rota-Baxter Leibniz algebra $(\frakg,\mu,K)$ is said to be rigid if every 1-parameter formal deformation is trivial.
\end{defn}

\begin{thm}
	Let $(\frakg,\mu,K)$ be a modified Rota-Baxter Leibniz algebra. If $\rmH^2_{\RBA}(\frakg)=0$, then $(\frakg,\mu,K)$ is rigid.
\end{thm}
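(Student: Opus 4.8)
The plan is to follow the standard Gerstenhaber-type rigidity argument adapted to the bicomplex $\C^\bullet_{\RBA}(\frakg)$. Suppose $(\frakg[[t]],\mu_t,K_t)$ is an arbitrary $1$-parameter formal deformation of $(\frakg,\mu,K)$. I want to show it is equivalent to the trivial deformation. The idea is to kill the lowest-order nontrivial term by a formal isomorphism, and iterate.

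First I would set up the induction: suppose that after applying some formal isomorphism we may assume $\mu_i = 0$ and $K_i = 0$ for $1 \leqslant i \leqslant n-1$, while $(\mu_n,K_n)$ is the first potentially nonzero pair (the base case $n=1$ being the original deformation). The key step is to observe that, just as in Proposition~\ref{Prop: Infinitesimal is 2-cocyle}, comparing coefficients of $t^n$ in Equations~(\ref{Eq: deform eq for products in RBA}) and (\ref{Eq: Deform RB operator in RBA}) under this vanishing hypothesis collapses all the cross terms $\mu_i(\mu_j(-,-),-)$ and $K_i(\mu_j(-,-))$ with $i,j \geqslant 1$, leaving exactly $\delta^2(\mu_n)=0$ and $\Phi^2(\mu_n) = -\partial^1(K_n)$; hence $(\mu_n,K_n)$ is a $2$-cocycle in $\C^\bullet_{\RBA}(\frakg)$. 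Since $\rmH^2_{\RBA}(\frakg)=0$, there exists $(\psi_n, h) \in \C^2_{\RBA}(\frakg) = \C^2_\PLA(\frakg) \oplus \C^1_{\RBO}(\frakg)$ — actually I only need the $\C^1_\PLA$ and a suitable component — with $(\mu_n,K_n) = d^1(\psi_n, 0)$ for a linear map $\psi_n\colon\frakg\to\frakg$; concretely $\mu_n = \delta^1(\psi_n)$ and $K_n = -\Phi^1(\psi_n)$, where by the formulas in Section 4, $\delta^1(\psi_n)(x,y) = [\psi_n(x),y]_\frakg + [x,\psi_n(y)]_\frakg - \psi_n([x,y]_\frakg)$ and $\Phi^1(\psi_n) = \psi_n\circ K - K\circ\psi_n$ up to sign.

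Next I would use $\psi_t = \Id_\frakg - \psi_n t^n$ (or $\Id + \psi_n t^n$, sign chosen to match the coboundary formula) as a formal isomorphism and define the new deformation $(\mu_t', K_t')$ by $\psi_t \circ \mu_t' = \mu_t \circ (\psi_t \otimes \psi_t)$ and $\psi_t\circ K_t' = K_t\circ \psi_t$. A direct expansion shows $\mu_i' = 0$ and $K_i' = 0$ for $1 \leqslant i \leqslant n-1$ still, and moreover $\mu_n' = \mu_n - \delta^1(\psi_n) = 0$ and $K_n' = K_n + \Phi^1(\psi_n) = 0$ (the relevant computation is exactly the one in the proof of the previous theorem, run at order $t^n$ instead of $t^1$). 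Thus the first nonvanishing term has been pushed to degree $\geqslant n+1$.

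Finally, iterating this procedure and taking the (formal) composite of all the isomorphisms $\psi_t$ constructed at each stage — which converges in the $t$-adic topology since the $n$-th one is $\Id$ modulo $t^n$ — produces a formal isomorphism from $(\frakg[[t]],\mu_t,K_t)$ to the trivial deformation $(\mu,K)$. Hence $(\frakg,\mu,K)$ is rigid. The main obstacle, and the only place real care is needed, is the bookkeeping in the second step: checking that comparing the $t^n$-coefficient really does yield the cocycle identity $d^1(\mu_n,K_n)=0$ once the lower terms vanish, and that the chosen normalization of $\psi_t$ cancels $(\mu_n,K_n)$ with the correct signs — both follow the same template as Proposition~\ref{Prop: Infinitesimal is 2-cocyle} and the equivalence theorem above, so no genuinely new computation is required, only a careful degree shift.
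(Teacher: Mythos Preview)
Your approach is essentially the same as the paper's, and the inductive structure is correct. There is one point you gloss over that the paper handles explicitly: when you write ``there exists $(\psi_n, h) \in \C^2_{\RBA}(\frakg)$ \ldots\ with $(\mu_n,K_n) = d^1(\psi_n, 0)$'', you have the wrong cochain group (it should be $\C^1_{\RBA}(\frakg) = \C^1_\PLA(\frakg)\oplus \C^0_{\RBO}(\frakg)$), and more importantly you assume without argument that the coboundary can be chosen with vanishing $\C^0_{\RBO}$-component. A priori the hypothesis $\rmH^2_{\RBA}(\frakg)=0$ only gives $(\mu_n,K_n)=d^1(\psi_n',x)$ for some pair $(\psi_n',x)$ with $x\in\frakg$, i.e.\ $\mu_n=\delta^1(\psi_n')$ and $K_n=-\partial^0(x)-\Phi^1(\psi_n')$. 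The paper absorbs the extra piece by setting $\psi_n=\psi_n'+\delta^0(x)$ and invoking the chain-map identity $\Phi^1\circ\delta^0=\partial^0\circ\Phi^0=\partial^0$, so that $\delta^1(\psi_n)=\mu_n$ and $-\Phi^1(\psi_n)=K_n$ as you want. Once this is fixed, your argument coincides with the paper's.
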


\begin{proof}Let $(\frakg[[t]], \mu_t, K_t)$ be a $1$-parameter formal deformation of $(\frakg, \mu, K)$. By Proposition~\ref{Prop: Infinitesimal is 2-cocyle},
$(\mu_1, K_1)$ is a $2$-cocycle. By $\rmH^2_{\RBA}(\frakg)=0$, there exists a $1$-cochain $$(\psi_1', x) \in \C^1_\RBA(\frakg)= C^1_{\PLA}(\frakg)\oplus \Hom(k, \frakg)$$ such that
$(\mu_1, K_1) =  d^1(\psi_1', x), $
that is, $\mu_1=\delta^1(\psi_1')$ and $K_1=-\partial^0(x)-\Phi^1(\psi_1')$. Let $\psi_1=\psi_1'+\delta^0(x)$. Then
 $\mu_1= \delta^1(\psi_1)$ and $K_1=-\Phi^1(\psi_1)$, as it can be readily seen that $\Phi^1(\delta^0(x))=\partial^0(x)$.

Setting $\psi_t = \Id_\frakg -\psi_1t$, we have a deformation $(\frakg[[t]], \overline{\mu}_t, \overline{K}_t)$, where
$$\overline{\mu}_t=\psi_t^{-1}\circ \mu_t\circ (\psi_t\times \psi_t)$$
and $$\overline{K}_t=\psi_t^{-1}\circ K_t\circ \psi_t.$$
  It can be easily verify  that $\overline{\mu}_1=0, \overline{K}_1=0$. Then
    $$\begin{array}{rcl} \overline{\mu}_t&=& \mu+\overline{\mu}_2t^2+\cdots,\\
 K_t&=& K+\overline{K}_2t^2+\cdots.\end{array}$$
   By Equations~(\ref{Eq: deform eq for  products in RBA}) and (\ref{Eq: Deform RB operator in RBA}), we see that $(\overline{\mu}_2,  \overline{K}_2)$ is still a $2$-cocyle, so by induction, we can show that
  $ (\frakg[[t]], \mu_t , K_t) $ is equivalent to the trivial extension $(\frakg[[t]], \mu, K).$
Thus, $(\frakg,\mu,K)$ is rigid.

\end{proof}

\section{Abelian extensions of modified Rota-Baxter Leibniz algebras}

 In this section, we study abelian extensions of modified Rota-Baxter Leibniz algebras and show that they are classified by the first cohomology, as one would expect of a good cohomology theory.

 Notice that a vector space $V$ together with a linear transformation $K_V:V\to V$ is naturally a modified Rota-Baxter Leibniz algebra where the multiplication on $V$ is defined to be $[u, v]_V=0$ for all $u, v\in V.$

 \begin{defn}
 	An   abelian extension  of modified Rota-Baxter Leibniz algebras is a short exact sequence of  morphisms of modified Rota-Baxter Leibniz algebras
 \begin{eqnarray}\label{Eq: abelian extension} 0\to (V,[\cdot, \cdot]_V, K_V)\stackrel{i}{\to} (\hat{\frakg},[\cdot, \cdot]_{\hat{\frakg}},  \hat{K})\stackrel{p}{\to} (\frakg, [\cdot, \cdot]_{\frakg}, K)\to 0,
 \end{eqnarray}
 that is, there exists a commutative diagram:
 	\[\begin{CD}
 		0@>>> {V} @>i >> \hat{\frakg} @>p >> \frakg @>>>0\\
 		@. @V {K_V} VV @V {\hat{K}} VV @V K VV @.\\
 		0@>>> {V} @>i >> \hat{\frakg} @>p >> \frakg @>>>0,
 	\end{CD}\]
 where the modified Rota-Baxter Leibniz algebra $(V, [\cdot, \cdot]_V, K_V)$	satisfies  $[\cdot ,v]_V=0$ for all $v \in V.$

 We will call $(\hat{\frakg},[\cdot, \cdot]_{\hat{\frakg}}, \hat{K})$ an abelian extension of $(\frakg,[\cdot, \cdot]_{\frakg}, K)$ by $(V,[\cdot, \cdot]_V, K_V)$.
 \end{defn}

 \begin{defn}
 	Let $(\hat{\frakg}_1,[\cdot, \cdot]_{\hat{\frakg}_1}, \hat{K}_1)$ and $(\hat{\frakg}_2,[\cdot, \cdot]_{\hat{\frakg}_2}, \hat{K}_2)$ be two abelian extensions of $(\frakg,[\cdot, \cdot]_{\frakg}, K)$ by $(V,[\cdot, \cdot]_V, K_V)$. They are said to be  isomorphic  if there exists an isomorphism of modified Rota-Baxter Leibniz algebras $\zeta:(\hat{\frakg}_1,\hat{K}_1)\rar (\hat{\frakg}_2,\hat{K}_2)$ such that the following commutative diagram holds:
 	\begin{eqnarray}\label{Eq: isom of abelian extension}\begin{CD}
 		0@>>> {(V,[\cdot, \cdot]_V, K_V)} @>i >> (\hat{\frakg}_1,[\cdot, \cdot]_{\hat{\frakg}_1},{\hat{K}_1}) @>p >> (\frakg,[\cdot, \cdot]_{\frakg}, K) @>>>0\\
 		@. @| @V \zeta VV @| @.\\
 		0@>>> {(V,[\cdot, \cdot]_V, K_V)} @>i >> (\hat{\frakg}_2,[\cdot, \cdot]_{\hat{\frakg}_2},{\hat{K}_2}) @>p >> (\frakg,[\cdot, \cdot]_{\frakg}, K) @>>>0.
 	\end{CD}\end{eqnarray}
 \end{defn}

 A   section of an abelian extension $(\hat{\frakg},[\cdot, \cdot]_{\hat{\frakg}}, {\hat{K}})$ of $(\frakg,[\cdot, \cdot]_{\frakg}, K)$ by $(V,[\cdot, \cdot]_V, K_V)$ is a linear map $s:\frakg\rar \hat{\frakg}$ such that $p\circ s=\Id_\frakg$.

 We will show that isomorphism classes of  abelian extensions of $(\frakg,[\cdot, \cdot]_{\frakg}, K)$ by $(V,[\cdot, \cdot]_V, K_V)$ are in bijection with the second cohomology group   ${\rmH}_{\RBA}^2(\frakg,V)$.

 \bigskip

Let    $(\hat{\frakg},[\cdot, \cdot]_{\hat{\frakg}}, \hat{K})$ be  an abelian extension of $(\frakg,[\cdot, \cdot]_{\frakg}, K)$ by $(V,[\cdot, \cdot]_V, K_V)$ having the form Equation~\eqref{Eq: abelian extension}. Choose a section $s:\frakg\rar \hat{\frakg}$, and define $\rho^L,\rho^R: \frakg\rightarrow \mathfrak{gl}(V)$
 \begin{eqnarray*}
\rho^L(x)v:=[s(x), i(v)]_{\hat{\frakg}}~~~~\rho^R(x)v:=[i(v), s(x)]_{\hat{\frakg}}.
 \end{eqnarray*}
 for all $x  \in \frakg, v\in V$.
 \begin{prop}\label{Prop: new RB bimodules from abelian extensions}
 	With the above notations, $(V, \rho^L, \rho^R, K_V)$ is a representation over $(\frakg,[\cdot, \cdot]_\frakg, K)$.
 \end{prop}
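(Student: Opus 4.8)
The plan is to verify, in order, the three Leibniz-representation axioms for $(V,\rho^L,\rho^R)$ over $(\frakg,[\cdot,\cdot]_\frakg)$, and then the two modified Rota-Baxter compatibility equations of Definition~\ref{Def: Rota-Baxter representations} for the operator $K_V$. The whole argument rests on one elementary observation: since $p$ is a morphism of Leibniz algebras and $V=\ker p$ is an abelian ideal on which $[\cdot,v]_V=0$, for any $x\in\frakg$ and $v\in V$ the element $[s(x),i(v)]_{\hat\frakg}$ again lies in $i(V)$ (because $p[s(x),i(v)]_{\hat\frakg}=[x,0]_\frakg=0$), so $\rho^L,\rho^R$ are well defined as maps $\frakg\to\mathfrak{gl}(V)$; moreover the value is independent of the choice of section $s$, since two sections differ by a map into $i(V)$ and $i(V)$ is abelian. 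I would state this normalization once at the start.

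Next I would check the three bracket identities. For each, expand the defining expression $\rho^L([x,y]_\frakg)v$ using that $s$ is a section: write $s([x,y]_\frakg)=[s(x),s(y)]_{\hat\frakg}-\omega(x,y)$ for the (not yet needed) $2$-cochain $\omega$ valued in $i(V)$, and observe that the $\omega$-terms contribute nothing because $[i(V),i(v)]_{\hat\frakg}=0=[i(v),i(V)]_{\hat\frakg}$ as $V$ is abelian. Thus $\rho^L([x,y]_\frakg)v=[[s(x),s(y)]_{\hat\frakg},i(v)]_{\hat\frakg}$, and now applying the Leibniz identity in $\hat\frakg$ (in the form $[[a,b],c]=[a,[b,c]]-[b,[a,c]]$) and translating each bracket back through $i$ gives exactly $\rho^L(x)\rho^L(y)v-\rho^L(y)\rho^L(x)v$. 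The other two identities follow the same way, expanding $[i(v),[s(x),s(y)]_{\hat\frakg}]_{\hat\frakg}$ via the Leibniz identity and keeping careful track of which entries are the abelian vector $i(v)$. These are routine but slightly fiddly sign/bracket bookkeeping, so I would do one in detail and say "similarly" for the rest.

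Finally, for the modified Rota-Baxter condition on $K_V$: by hypothesis $\hat K\circ i=i\circ K_V$ and $p\circ\hat K=K\circ p$, so $\hat K(s(x))-s(K(x))\in i(V)$; call it $i(\tau(x))$ for a linear map $\tau:\frakg\to V$. Then I compute $\rho^L(K(x))K_V(v)=[s(K(x)),i(K_V(v))]_{\hat\frakg}$, replace $s(K(x))$ by $\hat K(s(x))-i(\tau(x))$ and $i(K_V(v))$ by $\hat K(i(v))$; the $\tau$-term dies (abelian), leaving $[\hat K(s(x)),\hat K(i(v))]_{\hat\frakg}$, to which I apply the modified Rota-Baxter identity \eqref{Eq: Rota-Baxter relation} for $\hat K$ in $\hat\frakg$. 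Expanding the right-hand side $\hat K([\hat K(s(x)),i(v)]_{\hat\frakg}+[s(x),\hat K(i(v))]_{\hat\frakg})+\lambda[s(x),i(v)]_{\hat\frakg}$ and again discarding every $\tau$-contribution (each sits inside a bracket with an element of $i(V)$, hence vanishes) yields precisely $K_V(\rho^L(K(x))v+\rho^L(x)K_V(v))+\lambda\rho^L(x)v$ after pulling back through the injection $i$ and using $\hat K i=iK_V$. The second equation is identical with $\rho^R$ in place of $\rho^L$. I expect the main obstacle to be purely organizational: making sure that in every expansion the "correction terms" ($\omega$ from the section failing to be a homomorphism, $\tau$ from $\hat K$ not commuting with $s$) are each visibly annihilated by abelianness of $V$, and that the Leibniz identity in $\hat\frakg$ is applied with the entries in the right order; once that bookkeeping convention is fixed, no genuine difficulty remains.
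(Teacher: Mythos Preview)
Your proposal is correct and follows essentially the same route as the paper's own proof: both exploit that the correction terms $s([x,y]_\frakg)-[s(x),s(y)]_{\hat\frakg}$ and $\hat K(s(x))-s(K(x))$ lie in the abelian ideal $i(V)$ and therefore vanish when bracketed against $i(v)$, after which the Leibniz identity in $\hat\frakg$ yields the representation axioms and the modified Rota-Baxter identity for $\hat K$ yields the compatibility with $K_V$. The paper's version is simply terser—it records the key fact $\hat K(s(x))-s(K(x))\in V\Rightarrow[\hat K(s(x)),i(v)]_{\hat\frakg}=[s(K(x)),i(v)]_{\hat\frakg}$ and then writes out the $\rho^L$-computation in three lines, declaring the Leibniz-representation axioms ``straightforward''—whereas you spell out explicitly the auxiliary cochains $\omega$ and $\tau$ (the paper's later $\psi$ and $\chi$) and track their annihilation; but the underlying argument is identical.
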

 \begin{proof}
 	It is straightforward to check that $(V, \rho^L, \rho^R)$ is a representation over $(\frakg,[\cdot, \cdot]_\frakg)$. Moreover, ${\hat{K}}(s(x))-s(K(x))\in V$ means that  $\rho^L({\hat{K}}(s(x)) v=\rho^L(s(K(x))) v$ and $\rho^R({\hat{K}}(s(x)) v=\rho^R(s(K(x))) v$. Thus we have
 	\begin{align*}
 		\rho^L(K(x))K_V(v)
 &= \rho^L(sK(x))K_V(v)\\
 		&=K_V\big(\rho^L(\hat{K}(s(x))) v +\rho^L(s(x))K_V(v) \big)+\lambda\rho^L(s(x))v\\
 		&=K_V\big(\rho^L(K(x)) v +\rho^L(x)K_V(v) \big)+\lambda\rho^L(x)v.
 	\end{align*}
 Similar to prove that
 \begin{eqnarray*}
&\rho^R(K(x))K_V(v)&=K_V\big(\rho^R(K(x)) v +\rho^R(x)K_V(v)\big) +\lambda \rho^R(x)v.
 \end{eqnarray*}
 	Hence, $(V, \rho^L, \rho^R, K_V)$ is a  representation over $(\frakg, [\cdot, \cdot]_\frakg,  K)$.
 \end{proof}

 We  further  define linear maps $\psi:\frakg\otimes \frakg\rar V$ and $\chi:\frakg\rar V$ respectively by
 \begin{align*}
 	\psi(x, y)&=[s(x), s(y)]_{\hat{\frakg}}-s([x, y]_\frakg),\quad\forall x, y\in \frakg,\\
 	\chi(x)&={\hat{K}}(s(x))-s(K(x)),\quad\forall x\in \frakg.
 \end{align*}

 \begin{prop}\label{prop:2-cocycle}
 	 The pair
 	$(\psi,\chi)$ is a 2-cocycle  of  modified Rota-Baxter Leibniz algebras $(\frakg,[\cdot, \cdot]_\frakg, K)$ with  coefficients  in the representation $(V, \rho^L, \rho^R, K_V)$ introduced in Proposition~\ref{Prop: new RB bimodules from abelian extensions}.
 \end{prop}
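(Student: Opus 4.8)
The plan is to verify that the pair $(\psi, \chi)$ sits in the kernel of the differential $d^2$ of the complex $\C^\bullet_{\RBA}(\frakg, V)$, which by definition of the mapping cone amounts to two separate identities: first, that $\psi$ is a $2$-cocycle of the underlying Leibniz algebra with coefficients in $(V,\rho^L,\rho^R)$, i.e. $\delta^2(\psi) = 0$ in $\C^\bullet_\PLA(\frakg, V)$; and second, that $\Phi^2(\psi) = -\partial^1(\chi)$ in $\C^\bullet_{\RBO}(\frakg, V)$. Recall $d^2(\psi,\chi) = (\delta^2\psi,\, -\partial^1\chi - \Phi^2(\psi))$, so these two identities are exactly equivalent to $d^2(\psi,\chi)=0$.

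For the first identity, I would work in $\hat{\frakg}$ and use that the bracket $[\cdot,\cdot]_{\hat{\frakg}}$ satisfies the Leibniz identity. Expanding $[s(x),[s(y),s(z)]_{\hat{\frakg}}]_{\hat{\frakg}}$ via the Leibniz relation and substituting $[s(y),s(z)]_{\hat{\frakg}} = s([y,z]_\frakg) + \psi(y,z)$ (and similarly for the other brackets), then projecting: since $i(V)$ is abelian and is a two-sided ideal with $[i(V), i(V)]_{\hat{\frakg}} = 0$, all terms quadratic in $\psi$ drop, and the remaining $V$-valued terms collect precisely into the Leibniz $2$-cocycle condition
\[
\rho^L(x)\psi(y,z) - \psi([x,y]_\frakg, z) - \psi(y,[x,z]_\frakg) - \rho^R(z)\psi(x,y) + \cdots = 0,
\]
which is $(\delta^2\psi)(x,y,z) = 0$. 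This is the standard computation identifying abelian extensions of Leibniz algebras with $H^2_\PLA$, so it may be cited or done briefly.

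For the second identity, I would start from the relation $\hat{K}(s(x)) = s(K(x)) + \chi(x)$ together with the modified Rota-Baxter identity \eqref{Eq: Rota-Baxter relation} satisfied by $\hat{K}$ on $\hat{\frakg}$. Computing $[\hat{K}(s(x)), \hat{K}(s(y))]_{\hat{\frakg}}$ two ways — once via the modified Rota-Baxter relation for $\hat{K}$, once by expanding $\hat{K}(s(x)) = s(K(x)) + \chi(x)$ inside the bracket and using the definitions of $\psi$, $\rho^L$, $\rho^R$ — and then comparing the $V$-components, I expect to land on an identity relating $\psi$ evaluated on $K$-arguments, $K_V \circ \psi$, $\rho^L(K(x))\chi(y)$, $\rho^R(K(y))\chi(x)$, $K_V(\rho^L(x)\chi(y))$ and so on. The task is to recognize that this identity is exactly $\Phi^2(\psi)(x,y) + \partial^1(\chi)(x,y) = 0$, where $\Phi^2$ is the explicit formula given before Proposition~\ref{Prop: Chain map Phi} (for $n=2$ the sum over odd $r$ contributes the $r=1$ terms with coefficient $1$, and there is one even term $r=2$ with coefficient $-\lambda$), and $\partial^1$ is the Leibniz differential for the twisted representation $(V,\rho^L_K,\rho^R_K,K_V)$ on $\frakg_K$ from Theorem~\ref{Prop: new-representation}.

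The main obstacle I anticipate is the bookkeeping in this second identity: matching the output of the $\hat{K}$-computation term-by-term against the combination $\Phi^2(\psi) + \partial^1(\chi)$ requires carefully rewriting $\rho^L_K, \rho^R_K$ in terms of $\rho^L, \rho^R$ and $K_V$ (via their defining formulas), and tracking the weight-$\lambda$ contributions and the single $K_V$-correction term in $\Phi^2$. The conceptual content is light — everything follows from the two defining relations of $\hat{K}$ and the Leibniz identity in $\hat{\frakg}$, plus the fact that $i(V)$ is an abelian ideal on which $\hat{K}$ restricts to $K_V$ — but the calculation is lengthy, so I would organize it by first recording the $V$-valued consequences of $p$ being a morphism (namely $\psi$ measures the failure of $s$ to be a homomorphism and $\chi$ the failure of $s$ to intertwine $K$ and $\hat{K}$), and then substituting mechanically.
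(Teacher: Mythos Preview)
Your proposal is correct and is exactly the direct computation the paper has in mind; in fact the paper omits the proof entirely, stating only that ``the proof is by direct computations, so it is left to the reader.'' Your two-step plan---verify $\delta^2\psi=0$ from the Leibniz identity in $\hat{\frakg}$, then verify $\Phi^2(\psi)+\partial^1(\chi)=0$ from the modified Rota-Baxter identity for $\hat K$---is precisely the intended argument, and your remarks about the abelian ideal $i(V)$ killing quadratic-in-$\psi$ terms and about $\hat K|_V=K_V$ are the right structural observations.

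One small caution on the bookkeeping you flagged: recheck the $r=2$ contribution in $\Phi^2$ against the paper's displayed formula for $\Phi^n$. As written there, the even-$r$ sum carries coefficient $(-\lambda)^{r/2+1}$ together with a factor of $K_V$, so for $n=2$, $r=2$ it would contribute $-\lambda^2\,K_V(\psi(x,y))$ rather than a bare $-\lambda\,\psi(x,y)$. When you actually expand $[\hat K(s(x)),\hat K(s(y))]_{\hat\frakg}$ and compare, you will see which form balances; this will also tell you whether the displayed $\Phi^n$ has a misprint in the even-$r$ exponent. Either way, the method you describe is the right one and will produce the identity.
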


 	The proof is by direct computations, so it is left to the reader.

 The choice of the section $s$ in fact determines a splitting
 $$\xymatrix{0\ar[r]&  V\ar@<1ex>[r]^{i} &\hat{\frakg}\ar@<1ex>[r]^{p} \ar@<1ex>[l]^{t}& \frakg \ar@<1ex>[l]^{s} \ar[r] & 0}$$
 subject to $t\circ i=\Id_V, t\circ s=0$ and $ it+sp=\Id_{\hat{\frakg}}$.
 Then there is an induced isomorphism of vector spaces
 $$\left(\begin{array}{cc} p& t\end{array}\right): \hat{\frakg}\cong   \frakg\oplus V: \left(\begin{array}{c} s\\ i\end{array}\right).$$
We can  transfer the modified Rota-Baxter Leibniz algebra structure on $\hat{\frakg}$ to $\frakg\oplus V$ via this isomorphism.
  It is direct to verify that this  endows $\frakg\oplus V$ with a Leibniz bracket $[\cdot, \cdot]_\psi$ and a modified Rota-Baxter operator $K_\chi$ defined by
 \begin{align}
 	\label{eq:mul}[x+u, y+v]_\psi&=[x, y]_\frakg+\rho^L(x)v+\rho^R(y)u+\psi(x, y),\\
 	\label{eq:dif}K_\chi(x+u)&=K(x)+\chi(x)+K_V(u),
 \end{align}
 for any  $x, y\in \frakg,\,u, v\in V$.
 Moreover, we get an abelian extension
 $$0\to (V, [\cdot, \cdot]_V, K_V)\stackrel{\left(\begin{array}{cc} s& i\end{array}\right) }{\to} (\frakg\oplus V, K_\chi)\stackrel{\left(\begin{array}{c} p\\ t\end{array}\right)}{\to} (\frakg, [\cdot, \cdot]_\frakg, K)\to 0$$
 which is easily seen to be  isomorphic to the original one \eqref{Eq: abelian extension}.

 \medskip

 Now we investigate the influence of different choices of   sections.

 \begin{prop}\label{prop: different sections give}
 \begin{itemize}
 \item[(i)] Different choices of the section $s$ give the same  representation on $(V, [\cdot, \cdot]_V,  K_V)$;

 \item[(ii)]   the cohomological class of $(\psi,\chi)$ does not depend on the choice of sections.

 \end{itemize}

 \end{prop}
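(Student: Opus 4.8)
The plan is to take two sections $s_1, s_2 : \frakg \rar \hat{\frakg}$ of the abelian extension \eqref{Eq: abelian extension} and compare the data they produce. First I would observe that since $p \circ s_1 = p \circ s_2 = \Id_\frakg$, the difference $\varphi := s_1 - s_2 : \frakg \rar \hat{\frakg}$ lands in $\ker p = i(V)$, so it may be regarded as a linear map $\varphi : \frakg \rar V$. For part (i), I would plug $s_1 = s_2 + i\varphi$ into the defining formulas $\rho^L(x)v = [s(x), i(v)]_{\hat{\frakg}}$ and $\rho^R(x)v = [i(v), s(x)]_{\hat{\frakg}}$; because $V$ is an abelian ideal (i.e. $[i(u), i(v)]_{\hat{\frakg}} = i([u,v]_V) = 0$), the extra terms $[\,i\varphi(x), i(v)\,]_{\hat{\frakg}}$ and $[\,i(v), i\varphi(x)\,]_{\hat{\frakg}}$ vanish, so $\rho^L$ and $\rho^R$ are unchanged. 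One also checks that $K_V$ is intrinsically the restriction of $\hat{K}$ to $i(V)$, hence independent of $s$ as well; so the full representation $(V,\rho^L,\rho^R,K_V)$ is the same.

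For part (ii), write $(\psi_1,\chi_1)$ and $(\psi_2,\chi_2)$ for the $2$-cocycles attached to $s_1$ and $s_2$ via the formulas $\psi(x,y) = [s(x), s(y)]_{\hat{\frakg}} - s([x,y]_\frakg)$ and $\chi(x) = \hat{K}(s(x)) - s(K(x))$. The key computation is to expand $\psi_1 - \psi_2$ and $\chi_1 - \chi_2$ in terms of $\varphi$. Using $s_1 = s_2 + i\varphi$, bilinearity of the bracket, and the vanishing $[i\varphi(x), i\varphi(y)]_{\hat{\frakg}} = 0$, one gets
\[
\psi_1(x,y) - \psi_2(x,y) = \rho^L(x)\varphi(y) + \rho^R(y)\varphi(x) - \varphi([x,y]_\frakg),
\]
which is exactly $(\delta^1 \varphi)(x,y)$ in the Leibniz cochain complex $\C^\bullet_\PLA(\frakg, V)$. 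Similarly, using $\hat{K} \circ i = i \circ K_V$ (the left square of the extension diagram), one obtains
\[
\chi_1(x) - \chi_2(x) = \hat{K}(i\varphi(x)) - i\varphi(K(x)) + \bigl(\text{terms from } \chi_2\text{-part}\bigr) = K_V(\varphi(x)) - \varphi(K(x)) - \bigl(\Phi^1\varphi\bigr)(x),
\]
where I am identifying $\rho^L_K, \rho^R_K$-type corrections through the formula defining $\Phi^1$ from Section 3; the upshot is $\chi_1 - \chi_2 = -\Phi^1(\varphi) - \partial^0(\text{something})$, but since $\varphi$ is just a $1$-cochain in $\C^1_\PLA$ the clean statement is that $(\psi_1 - \psi_2, \chi_1 - \chi_2) = d^1(\varphi, 0)$ in $\C^\bullet_{\RBA}(\frakg, V)$. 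Hence $(\psi_1,\chi_1)$ and $(\psi_2,\chi_2)$ differ by a coboundary and represent the same class in $\rmH^2_{\RBA}(\frakg, V)$.

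The main obstacle I anticipate is bookkeeping in the $\chi$-computation: one must be careful that the map $\hat{K}$ is only $\bfk$-linear, not an algebra map, and that its interaction with $s_1 = s_2 + i\varphi$ produces precisely the combination $K_V \circ \varphi - \varphi \circ K$ that matches $-\Phi^1(\varphi)$ as defined in Section 3 (recall $\Phi^1(f)(x) = f(K(x)) - K_V(f(x))$ up to sign, so $-\Phi^1(\varphi)(x) = K_V(\varphi(x)) - \varphi(K(x))$). Matching signs and conventions between the mapping-cone differential $d^1(f,g) = (\delta^1 f, -\partial^0 g - \Phi^1 f)$ and the raw bracket expansion is the only delicate point; the rest is routine bilinear algebra using that $i(V)$ is an abelian ideal and that all structure maps commute with $K_V$ on $V$. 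I would therefore present part (i) in one line, then devote the bulk of the proof to exhibiting $(\varphi, 0)$ explicitly as the coboundary witness in part (ii).
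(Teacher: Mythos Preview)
Your approach is correct and essentially identical to the paper's: define $\varphi = s_1 - s_2 \in \Hom(\frakg,V)$, use the abelian-ideal property to get (i), and then verify $(\psi_1-\psi_2,\chi_1-\chi_2)=d^1(\varphi,0)$ for (ii). The only thing to clean up is your displayed computation for $\chi_1-\chi_2$, which is garbled (you write ``$= K_V(\varphi(x)) - \varphi(K(x)) - (\Phi^1\varphi)(x)$'', double-counting, and then invoke a spurious $\partial^0$-term); the correct one-line statement is simply $\chi_1(x)-\chi_2(x)=K_V(\varphi(x))-\varphi(K(x))=-\Phi^1(\varphi)(x)$, exactly as you note in your final paragraph.
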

 \begin{proof}Let $s_1$ and $s_2$ be two distinct sections of $p$.
  We define $\gamma:\frakg\rar V$ by $\gamma(x)=s_1(x)-s_2(x)$.

  Since the modified Rota-Baxter  Leibniz algebra $(V, [\cdot, \cdot]_V,  K_V)$	satisfies  $[u, v]_V=0$ for all $u, v\in V$,
  $$\rho^L(s_1(x))u= \rho^L(s_2(x)+\gamma(x)) u=\rho^L(s_2(x) ) u.$$ So different choices of the section $s$ give the same  representation on $(V, [\cdot, \cdot]_V, K_V)$;

  We   show that the cohomological class of $(\psi,\chi)$ does not depend on the choice of sections.   Then
 	\begin{align*}
 		\psi_1(x, y)&=[s_1(x),  s_1(y)]_{\hat{\frakg}}-s_1([x, y]_\frakg)\\
 		&=[s_2(x)+\gamma(x), s_2(y)+\gamma(y)]_{\hat{\frakg}}-(s_2([x, y]_\frakg)+\gamma([x, y]_\frakg))\\
 		&=([s_2(x),  s_2(y)]_{\hat{\frakg}}-s_2([x, y]_\frakg))+[s_2(x), \gamma(y)]_{\hat{\frakg}}+[\gamma(x),  s_2(y)]_{\hat{\frakg}}-\gamma([x, y]_\frakg)\\
 		&=([s_2(x),  s_2(y)]_{\hat{\frakg}}-s_2([x, y]_\frakg))+[x, \gamma(y)]_{\hat{\frakg}}+[\gamma(x),  y]_{\hat{\frakg}}-\gamma([x, y]_\frakg)\\
 		&=\psi_2(x, y)+\delta(\gamma)(x, y)
 	\end{align*}
 	and
 	\begin{align*}
 		\chi_1(x)&={\hat{K}}(s_1(x))-s_1(K(x))\\
 		&={\hat{K}}(s_2(x)+\gamma(x))-(s_2(K(x))+\gamma(K(x)))\\
 		&=({\hat{K}}(s_2(x))-s_2(K(x)))+{\hat{K}}(\gamma(x))-\gamma(K(x))\\
 		&=\chi_2(x)+K_V(\gamma(x))-\gamma(K(x))\\
 		&=\chi_2(x)-\Phi^1(\gamma)(x).
 	\end{align*}
 	That is, $(\psi_1,\chi_1)=(\psi_2,\chi_2)+d^1(\gamma)$. Thus $(\psi_1,\chi_1)$ and $(\psi_2,\chi_2)$ form the same cohomological class  {in $\rmH_{\RBA}^2(\frakg,V)$}.

 \end{proof}

 We show now the isomorphic abelian extensions give rise to the same cohomology classes.
 \begin{prop}Let $V$ be a vector space and  $K_V\in\End_\bfk(V)$. Then $(V,[\cdot, \cdot]_V,  K_V)$ is a  modified Rota-Baxter Leibniz algebra  with trivial Leibniz bracket.
 Let $(\frakg,[\cdot, \cdot]_\frakg, K)$ be a  modified Rota-Baxter Leibniz algebra.
 Two isomorphic abelian extensions of modified Rota-Baxter Leibniz algebras $(\frakg,[\cdot, \cdot]_\frakg,  K)$ by  $(V, [\cdot, \cdot]_V,  K_V)$  give rise to the same cohomology class  in $\rmH_{\RBA}^2(\frakg,V)$.
 \end{prop}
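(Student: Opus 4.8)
The plan is to show that an isomorphism $\zeta$ of abelian extensions, when combined with a choice of section, produces exactly the coboundary needed to relate the two $2$-cocycles. First I would fix an abelian extension $(\hat{\frakg}_1,[\cdot,\cdot]_{\hat{\frakg}_1},\hat{K}_1)$ and a section $s_1:\frakg\rar\hat{\frakg}_1$, giving rise to a $2$-cocycle $(\psi_1,\chi_1)$ and a representation $(V,\rho^L,\rho^R,K_V)$ as in Proposition~\ref{Prop: new RB bimodules from abelian extensions} and Proposition~\ref{prop:2-cocycle}. Given an isomorphism of extensions $\zeta:(\hat{\frakg}_1,\hat{K}_1)\rar(\hat{\frakg}_2,\hat{K}_2)$ compatible with the diagram~\eqref{Eq: isom of abelian extension}, I would observe that $s_2:=\zeta\circ s_1$ is a section of $p_2$, so by Proposition~\ref{prop: different sections give}(ii) the cohomology class computed from $(\hat{\frakg}_2,\hat{K}_2)$ is independent of the section and may be computed using $s_2$.

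The key computation is then that $(\psi_2,\chi_2)$ defined using the section $s_2=\zeta\circ s_1$ coincides with $(\psi_1,\chi_1)$ on the nose. Indeed, since $\zeta$ is a homomorphism of Leibniz algebras with $\zeta\circ i_1=i_2$ and $p_2\circ\zeta=p_1$, one has $\zeta([s_1(x),s_1(y)]_{\hat{\frakg}_1})=[s_2(x),s_2(y)]_{\hat{\frakg}_2}$ and $\zeta(s_1([x,y]_\frakg))=s_2([x,y]_\frakg)$; applying $\zeta$ (which restricts to the identity on $V$ via $i_1,i_2$) to the defining formula for $\psi_1$ shows $\psi_2=\psi_1$. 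Likewise, from $\zeta\circ\hat{K}_1=\hat{K}_2\circ\zeta$ one gets $\zeta(\hat{K}_1(s_1(x)))=\hat{K}_2(s_2(x))$ and $\zeta(s_1(K(x)))=s_2(K(x))$, so applying $\zeta$ to the formula $\chi_1(x)=\hat{K}_1(s_1(x))-s_1(K(x))$ gives $\chi_2=\chi_1$. One must also check that the representation $(V,\rho^L,\rho^R,K_V)$ obtained from $(\hat{\frakg}_2,\hat{K}_2)$ with section $s_2$ agrees with the one obtained from $(\hat{\frakg}_1,\hat{K}_1)$ with section $s_1$; this again follows from $\zeta\circ i_1=i_2$, $\zeta\circ s_1=s_2$, and the fact that $\zeta$ preserves brackets.

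Combining these facts: the class of $(\hat{\frakg}_1,\hat{K}_1)$ is $[(\psi_1,\chi_1)]$ computed with section $s_1$; the class of $(\hat{\frakg}_2,\hat{K}_2)$ is, by section-independence, the class of $(\psi_2,\chi_2)$ computed with the particular section $s_2=\zeta\circ s_1$; and we have just shown $(\psi_2,\chi_2)=(\psi_1,\chi_1)$. Hence the two extensions determine the same class in $\rmH_{\RBA}^2(\frakg,V)$. The main obstacle, and the only point requiring genuine care rather than bookkeeping, is verifying that the two constructions of the representation $(V,\rho^L,\rho^R,K_V)$ really coincide — i.e.\ that the class in $\rmH^2_{\RBA}(\frakg,V)$ is being taken in the \emph{same} coefficient module on both sides — since without this the comparison of cocycles would be meaningless; but this follows cleanly from the commutativity of~\eqref{Eq: isom of abelian extension} together with $\zeta|_V=\Id_V$. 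The remaining verifications are the routine diagram-chasing identities indicated above, which I would present compactly.
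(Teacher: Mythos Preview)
Your proposal is correct and follows essentially the same approach as the paper: choose a section $s_1$ of the first extension, transport it via $\zeta$ to a section $s_2=\zeta\circ s_1$ of the second, verify that the induced representations on $V$ agree because $\zeta|_V=\Id_V$, and then compute directly that $(\psi_2,\chi_2)=(\psi_1,\chi_1)$ using that $\zeta$ is a morphism of modified Rota-Baxter Leibniz algebras. Your explicit invocation of Proposition~\ref{prop: different sections give}(ii) to justify computing the class of the second extension with the particular section $s_2$ is a small clarification the paper leaves implicit, but the argument is otherwise identical.
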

 \begin{proof}
  Assume that $(\hat{\frakg}_1, [\cdot, \cdot]_{\hat{\frakg}_1},  {\hat{K}_1})$ and $(\hat{\frakg}_2, [\cdot, \cdot]_{\hat{\frakg}_2}, {\hat{K}_2})$ are two isomorphic abelian extensions of $(\frakg,[\cdot, \cdot]_{\frakg}, K)$ by $(V,[\cdot, \cdot]_V, K_V)$ as is given in \eqref{Eq: isom of abelian extension}. Let $s_1$ be a section of $(\hat{\frakg}_1,[\cdot, \cdot]_{\hat{\frakg}_1},  {\hat{K}_1})$. As $p_2\circ\zeta=p_1$, we have
 	\[p_2\circ(\zeta\circ s_1)=p_1\circ s_1=\Id_{\frakg}.\]
 	Therefore, $\zeta\circ s_1$ is a section of $(\hat{\frakg}_2,[\cdot, \cdot]_{\hat{\frakg}_2}, {\hat{K}_2})$. Denote $s_2:=\zeta\circ s_1$. Since $\zeta$ is a homomorphism of modified Rota-Baxter Leibniz algebras such that $\zeta|_V=\mathrm{id}_V$, $\zeta(\rho^L(x) m)=\zeta(\rho^L(s_1(x))m)=\rho^L(s_2(x)) m=\rho^L(x )m$, so $\zeta|_V: V\to V$ is compatible with the induced  representation.
 We have
 	\begin{align*}
 		\psi_2(x, y)&=[s_2(x), s_2(y)]_{\hat{\frakg}_2}-s_2([x, y]_{\frakg}\\
 &=[\zeta(s_1(x)), \zeta(s_1(y))]_{\hat{\frakg}_2}-\zeta(s_1([x, y]_{\frakg})\\
 		&=\zeta([s_1(x), s_1(y)]_{\hat{\frakg}_1}-s_1([x, y]_{\frakg})\\
 &=\zeta(\psi_1(x, y))\\
 		&=\psi_1(x, y)
 	\end{align*}
 	and
 	\begin{align*}
 		\chi_2(x)&={\hat{K}_2}(s_2(x))-s_2(K(x))={\hat{K}_2}(\zeta(s_1(x)))-\zeta(s_1(K(x)))\\
 		&=\zeta({\hat{K}_1}(s_1(x))-s_1(K(x)))=\zeta(\chi_1(x))\\
 		&=\chi_1(x).
 	\end{align*}
 	Consequently, two isomorphic abelian extensions give rise to the same element in {$\rmH_{\RBA}^2(\frakg,V)$}.
\end{proof}
 \bigskip

 Now we consider the reverse direction.

 Let $(V, \rho^L, \rho^R, K_V)$ be a representation over a modified Rota-Baxter Leibniz algebra $(\frakg, [\cdot, \cdot]_\frakg,  K)$, given two linear maps  $\psi:\frakg\otimes \frakg\rar V$ and $\chi:\frakg\rar V$, one can define  a Leibniz bracket $[\cdot, \cdot]_\psi$ and an operator $K_\chi$  on  $\frakg\oplus V$ by Equations~(\ref{eq:mul}) and (\ref{eq:dif}).
 The following fact is important:
 \begin{prop}\label{prop:2-cocycle}
 	The triple $(\frakg\oplus V,[\cdot, \cdot]_\psi, K_\chi)$ is a modified Rota-Baxter Leibniz algebra   if and only if
 	$(\psi,\chi)$ is a 2-cocycle  of the modified Rota-Baxter Leibniz algebra  $(\frakg,[\cdot, \cdot]_\frakg, K)$ with  coefficients  in $(V, \rho^L, \rho^R, K_V)$.
 In this case,    we obtain an abelian extension
  $$0\to (V, \rho^L, \rho^R, K_V)\stackrel{\left(\begin{array}{cc} 0& \mathrm{id}  \end{array}\right) }{\to} (\frakg\oplus V, K_\chi)\stackrel{\left(\begin{array}{c} \mathrm{id}\\ 0\end{array}\right)}{\to} (\frakg,[\cdot, \cdot]_\frakg,  K)\to 0,$$
  and the canonical section $s=\left(\begin{array}{cc}   \mathrm{id}  & 0\end{array}\right): (\frakg, K)\to (\frakg\oplus V, K_\chi)$ endows $V$ with the original representation.
 \end{prop}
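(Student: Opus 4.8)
The plan is to unwind the definition of the differential $d^\bullet$ on the cone complex $\C^\bullet_{\RBA}(\frakg,V)$ and match it term by term against the structure equations for $[\cdot,\cdot]_\psi$ and $K_\chi$. Recall that $\C^2_{\RBA}(\frakg,V)=\C^2_\PLA(\frakg,V)\oplus\C^1_{\RBO}(\frakg,V)$ and $d^2(\psi,\chi)=(\delta^2\psi,\,-\partial^1\chi-\Phi^2\psi)$, so the cocycle condition $d^2(\psi,\chi)=0$ is equivalent to the pair of equations $\delta^2\psi=0$ and $\Phi^2\psi=-\partial^1\chi$. First I would write out explicitly what it means for $[\cdot,\cdot]_\psi$ defined by \eqref{eq:mul} to satisfy the Leibniz identity: expanding $[x+u,[y+v,z+w]_\psi]_\psi$ and the two terms on the right using the formula, the terms not involving $\psi$ cancel because $(\frakg,[\cdot,\cdot]_\frakg)$ is a Leibniz algebra and $(V,\rho^L,\rho^R)$ is a representation (this is the content of the semidirect product construction), and what survives is precisely the statement $\delta^2(\psi)=0$ where $\delta$ is the Loday--Pirashvili differential of $\frakg$ with coefficients in $V$ given in Section~1.

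Next I would analyze the modified Rota--Baxter condition \eqref{Eq: Rota-Baxter relation} for $K_\chi$ on $(\frakg\oplus V,[\cdot,\cdot]_\psi)$. Plugging $K_\chi(x+u)=K(x)+\chi(x)+K_V(u)$ into $[K_\chi(x+u),K_\chi(y+v)]_\psi = K_\chi\bigl([K_\chi(x+u),y+v]_\psi+[x+u,K_\chi(y+v)]_\psi\bigr)+\lambda[x+u,y+v]_\psi$ and separating the $\frakg$-component from the $V$-component: the $\frakg$-component holds automatically since $K$ is a modified Rota--Baxter operator on $\frakg$, while the $V$-component, after using that $K_V$ satisfies the representation identities in Definition~\ref{Def: Rota-Baxter representations} (so that all the $K_V$-only terms cancel), reduces to an equation relating $\psi(K(x),K(y))$, the $\rho^L_K,\rho^R_K$-images of $\chi$, and $K_V\circ\psi$ terms. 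The claim is that this equation is exactly $\Phi^2(\psi)(x,y)=-(\partial^1\chi)(x,y)$, where $\Phi^2$ is the degree-$2$ piece of the chain map from Proposition~\ref{Prop: Chain map Phi} and $\partial^1$ is the differential of $\C^\bullet_{\RBO}(\frakg,V)$ built from $\rho^L_K,\rho^R_K$. One checks, using the low-degree formula $\Phi^2(f)(x,y)=f(K(x),K(y))-K_V f(x,K(y))-K_V f(K(x),y)+(-\lambda)^2\,(\text{correction})$ — more precisely the $n=2$ specialization of the displayed formula for $\Phi^n$ — that this matches; this is essentially the same bookkeeping that already appeared in the proof of Proposition~\ref{Prop: Infinitesimal is 2-cocyle} with $\mu_1$ replaced by $\psi$ and $K_1$ by $\chi$.

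Having established the equivalence, the ``in this case'' part is then the straightforward observation already recorded in the text just before Proposition~\ref{Prop: new RB bimodules from abelian extensions} and in the paragraph preceding this proposition: the maps $\bigl(\begin{smallmatrix}0&\mathrm{id}\end{smallmatrix}\bigr)$ and $\bigl(\begin{smallmatrix}\mathrm{id}\\0\end{smallmatrix}\bigr)$ are morphisms of modified Rota--Baxter Leibniz algebras (compatibility with the operators $K_V,K_\chi,K$ is immediate from \eqref{eq:dif}), the sequence is exact, $V$ sits inside $\frakg\oplus V$ as an abelian ideal with zero bracket, and computing $\rho^L,\rho^R$ from the canonical section $s=\bigl(\begin{smallmatrix}\mathrm{id}&0\end{smallmatrix}\bigr)$ via the formulas $\rho^L(x)v=[s(x),i(v)]_\psi$, $\rho^R(x)v=[i(v),s(x)]_\psi$ recovers exactly the original $\rho^L,\rho^R$ by \eqref{eq:mul} and similarly $K_V$ is recovered from \eqref{eq:dif}.

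The main obstacle will be the second step: correctly matching the $V$-component of the modified Rota--Baxter relation against $\Phi^2(\psi)+\partial^1(\chi)=0$, because $\partial^1$ is the differential of the $\RBO$-complex, which is built from the \emph{twisted} representation $(\rho^L_K,\rho^R_K)$ of Theorem~\ref{Prop: new-representation} rather than from $(\rho^L,\rho^R)$ directly, so one must carefully expand $\rho^L_K(x)\chi(y)=\rho^L(K(x))\chi(y)-K_V(\rho^L(x)\chi(y))$ and likewise for $\rho^R_K$, and verify that after also expanding $\Phi^2(\psi)$ every term accounts for exactly one term in the structure equation with no leftover. Everything else is routine term-cancellation of the kind the paper already leaves to the reader, so I would present the Leibniz-identity computation and the modified Rota--Baxter computation in parallel, flag the twisted-representation substitution as the delicate point, and otherwise keep the verification terse.
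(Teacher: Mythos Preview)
Your proposal is correct and follows essentially the same approach as the paper: reduce the Leibniz identity for $[\cdot,\cdot]_\psi$ to $\delta^2\psi=0$, reduce the modified Rota--Baxter identity for $K_\chi$ to $\partial^1\chi+\Phi^2\psi=0$, and note the remaining assertions are immediate. The paper's own proof is in fact terser than yours---it simply states the resulting equations and declares the identifications---so your explicit flagging of the twisted-representation substitution in $\partial^1$ and the careful specialization of $\Phi^2$ would make the argument more transparent than what appears in the paper.
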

 \begin{proof}
 	If $(\frakg\oplus V,[\cdot, \cdot]_\psi, K_\chi)$ is a  modified Rota-Baxter Leibniz algebra, then $[\cdot, \cdot]_\psi$ implies
 	\begin{align}\label{eq:mc}
 		&\psi(x, [y, z]_\frakg) + \rho^L(x)\psi(y, z)=\psi([x, y]_\frakg, z)+\rho^L(y)\psi(x,  z)+\rho^R(z)\psi(x, y),
 	\end{align}
 	which means $\delta^2(\psi)=0$ in $\C^\bullet(\frakg,V)$.
 	Since $K_\chi$ is a modified Rota-Baxter operator,
 	for any $x, y\in \frakg, u, v\in V$, we have
 	\begin{eqnarray*}
&&[K_\chi(x+u),  K_\chi(y+v)]_\psi=K_\chi\big([K_\chi(x+u), y+v]_\psi+[x+u, K_\chi(y+v)]_\psi\big)+\lambda [x+u, y+v]_\psi.\end{eqnarray*}

 	That is,
 	\[ \partial^1(\chi)+\Phi^2(\psi)=0.\]
 	Hence, $(\psi,\chi)$ is a  2-cocycle.
 	
 	 Conversely, if $(\psi,\chi)$ is a 2-cocycle, one can easily check that $(\frakg\oplus V,[\cdot, \cdot]_\psi, K_\chi)$ is a modified Rota-Baxter Leibniz algebra.

  The last statement is clear.
 \end{proof}

 Finally, we show the following result:
 \begin{prop}
 	Two cohomologous $2$-cocyles give rise to isomorphic abelian extensions.
 \end{prop}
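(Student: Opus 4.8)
The plan is to put both abelian extensions on the common underlying space $\frakg\oplus V$ via Proposition~\ref{prop:2-cocycle} and then exhibit an explicit isomorphism between them built from the $1$-cochain that witnesses the cohomology relation. Let $(\psi_1,\chi_1)$ and $(\psi_2,\chi_2)$ be cohomologous $2$-cocycles. By Proposition~\ref{prop:2-cocycle} they give modified Rota-Baxter Leibniz algebras $(\frakg\oplus V,[\cdot,\cdot]_{\psi_1},K_{\chi_1})$ and $(\frakg\oplus V,[\cdot,\cdot]_{\psi_2},K_{\chi_2})$, each fitting into an abelian extension of $(\frakg,[\cdot,\cdot]_\frakg,K)$ by $(V,[\cdot,\cdot]_V,K_V)$ with the canonical inclusion $V\hookrightarrow\frakg\oplus V$ and projection $p\colon\frakg\oplus V\twoheadrightarrow\frakg$.

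First I would normalize the coboundary. By hypothesis $(\psi_1,\chi_1)-(\psi_2,\chi_2)=d^1(\gamma,w)$ for some $\gamma\in \C^1_\PLA(\frakg,V)$ and $w\in \C^0_{\RBO}(\frakg,V)$, so that $\psi_1-\psi_2=\delta^1(\gamma)$ and $\chi_1-\chi_2=-\partial^0(w)-\Phi^1(\gamma)$. Since $\Phi^\bullet$ is a chain map one has $\Phi^1\circ\delta^0=\partial^0$; replacing $\gamma$ by $\gamma+\delta^0(w)$ leaves $\delta^1(\gamma)$ unchanged (because $\delta^1\delta^0=0$) while turning $-\Phi^1(\gamma)$ into $-\Phi^1(\gamma)-\partial^0(w)$. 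Hence we may assume $w=0$, i.e. $\psi_1-\psi_2=\delta^1(\gamma)$ and $\chi_1-\chi_2=-\Phi^1(\gamma)$, where explicitly $\delta^1(\gamma)(x,y)=\rho^L(x)\gamma(y)+\rho^R(y)\gamma(x)-\gamma([x,y]_\frakg)$ and $\Phi^1(\gamma)(x)=\gamma(K(x))-K_V(\gamma(x))$.

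Next I would define $\zeta\colon\frakg\oplus V\to\frakg\oplus V$ by $\zeta(x+u)=x+u+\gamma(x)$. This is a linear isomorphism with inverse $x+u\mapsto x+u-\gamma(x)$, it restricts to $\Id_V$ on $V$, and it satisfies $p\circ\zeta=p$; so $\zeta$ makes the diagram~\eqref{Eq: isom of abelian extension} commute provided it is a morphism of modified Rota-Baxter Leibniz algebras. Expanding $\zeta\big([x+u,y+v]_{\psi_1}\big)$ and $[\zeta(x+u),\zeta(y+v)]_{\psi_2}$ via~\eqref{eq:mul}, the difference of the two sides equals $\psi_1(x,y)-\psi_2(x,y)-\delta^1(\gamma)(x,y)$, which vanishes by the normalization; expanding $\zeta\big(K_{\chi_1}(x+u)\big)$ and $K_{\chi_2}\big(\zeta(x+u)\big)$ via~\eqref{eq:dif}, the difference equals $\chi_1(x)-\chi_2(x)+\Phi^1(\gamma)(x)$, which again vanishes. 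Hence $\zeta$ is an isomorphism of abelian extensions. The computation is entirely routine; the only step requiring a little care is the normalization, which uses the chain-map identity $\Phi^1\circ\delta^0=\partial^0$ exactly as in the proof of the rigidity theorem. Together with the three preceding propositions, this completes the proof that $\rmH^2_{\RBA}(\frakg,V)$ is in bijection with the set of isomorphism classes of abelian extensions of $(\frakg,[\cdot,\cdot]_\frakg,K)$ by $(V,[\cdot,\cdot]_V,K_V)$.
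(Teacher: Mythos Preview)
Your proof is correct and follows essentially the same route as the paper: normalize the $1$-cochain using the identity $\Phi^1\circ\delta^0=\partial^0$ to absorb the degree-zero piece, then build the isomorphism $\zeta$ on $\frakg\oplus V$ by shifting along $\gamma$. The only cosmetic difference is the sign in the definition of $\zeta$ (you take $x+u+\gamma(x)$ where the paper takes $x-\gamma(x)+u$); your sign convention is consistent with the verification you sketch.
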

 \begin{proof}

 	  Given two 2-cocycles $(\psi_1,\chi_1)$ and $(\psi_2,\chi_2)$, we can construct two abelian extensions $(\frakg\oplus V,[\cdot, \cdot]_{\psi_1},K_{\chi_1})$ and  $(\frakg\oplus V,[\cdot, \cdot]_{\psi_2},K_{\chi_2})$ via Equations~\eqref{eq:mul} and \eqref{eq:dif}. If they represent the same cohomology  class {in $\rmH_{\RBA}^2(\frakg,V)$}, then there exists two linear maps $\gamma_0:k\rightarrow V, \gamma_1:\frakg\to V$ such that $$(\psi_1,\chi_1)=(\psi_2,\chi_2)+(\delta^1(\gamma_1),-\Phi^1(\gamma_1)-\partial^0(\gamma_0)).$$
 	Notice that $\partial^0=\Phi^1\circ\delta^0$. Define $\gamma: \frakg\rightarrow V$ to be $\gamma_1+\delta^0(\gamma_0)$. Then $\gamma$ satisfies
 	\[(\psi_1,\chi_1)=(\psi_2,\chi_2)+(\delta^1(\gamma),-\Phi^1(\gamma)).\]
 	
 	Define $\zeta:\frakg\oplus V\rar \frakg\oplus V$ by
 	\[\zeta(x+u):=x-\gamma(x)+u.\]
 	Then $\zeta$ is an isomorphism of these two abelian extensions $(\frakg\oplus V,[\cdot, \cdot]_{\psi_1},K_{\chi_1})$ and  $(\frakg\oplus N,[\cdot, \cdot]_{\psi_2},K_{\chi_2})$.
 \end{proof}

\bigskip

\noindent
{{\bf Acknowledgments.} The work is supported by Natural Science Foundation of China (Grant Nos. 11871301, 12271292).

\end{document}